\newcommand{\Dp}{\displaystyle}
\newcommand{\R}{{\mathbb R}}
\newtheorem{theorem}{Theorem}[section]
\newtheorem{lemma}{Lemma}[section]
\newtheorem{defi}{Definition}[section]
\newtheorem{remark}{Remark}[section]
\newenvironment{proof}{\noindent{\bf Proof:}}{\hfill$\Box$}
\newcommand{\N}{{\mathbb N}}
\def\XXint#1#2#3{{\setbox0=\hbox{$#1{#2#3}{\int}$}
\vcenter{\hbox{$#2#3$}}\kern-.5\wd0}}
\begin{document}
\title{A non-autonomous bifurcation problem for a non-local scalar one-dimensional parabolic equation
\thanks{2010 Mathematics Subject Classification: 35K91;~ 35B32;~ 35B41;~ 35K05;~ 35J61;~ 35J67.
\newline {\it Key words and phrases}: Autonomous Chafee-Infante problem, Non local Parabolic equations, Bi\-fur\-ca\-tion, Comparison principles.
\newline {\small Instituto de Ci\^{e}ncias, Matem\'{a}ticas e Computa\c{c}\~{a}o/USP}}
}
\author{Alexandre N. Carvalho\thanks{Partially supported by FAPESP Grant  2018/10997-6 and CNPq Grant  303929/2015-4}, Yanan Li\thanks{Partially supported by NSFC Grant 11671367}, Tito L. M. Luna\thanks{Partially supported by FAPESP Grant  2019/20341-3 }\\  and Estefani M. Moreira\thanks{Partially supported by FAPESP Grant  2018/00065-9 }}


\maketitle

\begin{abstract}

In this paper we study the asymptotic behavior of solutions for a non-local non-autonomous scalar quasilinear parabolic problem in one space dimension. Our aim is to give a fairly complete description of the the forwards asymptotic behavior of solutions for models with Kirchoff type diffusion. In the autonomous we use the gradient structure of the model, some symmetry properties of solutions and develop comparison results to obtain a sequence of bifurcations of equilibria analogous to that seen in the model with local diffusivity. We give conditions so that the autonomous problem admits at most one positive equilibrium and analyse the existence of sign changing equilibria. Also using symmetry and our comparison results we construct what is called non-autonomous equilibria to describe part of the asymptotics of the associated non-autonomous non-local parabolic problem.

\end{abstract}

\section{Introduction and setting of the problem}

In this work we consider the following initial boundary value problem

\begin{equation}\label{1.1}
\left\{
  \begin{array}{ll}
    u_t-a(\|u_x\|^2)u_{xx}=\lambda u-\beta(t)u^3,  \ \ x\in (0, \pi), t>s,\\
    u(0, t)=u(\pi, t)=0,  \ \  t>s,\\
     u(x, s)=u_0(x),\ \ x\in (0, \pi),
  \end{array}
\right.
\end{equation}
where $\Dp\|u_x\|^2=\int^\pi_0|u_x(x)|^2dx$ (usual norm of the Hilbert space $H^1_0(0,\pi)$), $\lambda\in (0,\infty)$ is a parameter, $a:\R^+\to [1,2]$ is a locally Lipschitz function, $0<b_1<b_2$ and $\beta:\R^+\to [b_1,b_2]$ is a globally Lipschitz function.

\bigskip

The study of the inner structure of attractors for semilinear parabolic problems with local diffusivity has developed considerably and many very interesting results are available in the literatures (see, for example, \cite{Carvalho et al.} and references therein). The description of the inner structure for non-local models is much less exploited. Out aim is to provide some techniques to unravel the dynamics of such models in both autonomous and non-autonomous case.

\bigskip

Our problem has the origin in the developments that are concerned with the so called Chafee-Infante equation. The Chafee-Infante equation appeared in the literature for the first time in 1974, see \cite{CH-IN} and \cite{Chaf-Inf},
\begin{equation}\label{eq:C-I}
\left\{
  \begin{array}{ll}
    u_t-u_{xx}=\lambda u-bu^3,  \ \ x\in (0, \pi), t>s,\\
    u(0, t)=u(\pi, t)=0,  \ \  t>s,\\
     u(x, s)=u_0(x),\ \ x\in (0, \pi),
  \end{array}
\right.
\end{equation}
give us one of the few examples for which the structure of a global attractor is very well understood. We know that this model is gradient, that the number of equilibria, for $\lambda\in (n^2,(n+1)^2]$, is equal to $2n+1$, that the stable and unstable manifolds along a connection between two equilibria intersect transversaly (see {\color{red}\cite{HE,Angenent-SS})}), we know exactly the diagram o connections between two equilibria ({\color{red}see \cite{FR}}) and that it is structurally stable under autonomous and non-autonomous perturbations (see {\color{red}\cite{HE,Angenent-SS,BCL-Raugel})}).
For large non-autonomous perturbations in the parameter $b$ there has been some interesting development as well (see \cite{CLR-PAMS,Bcv}).

\bigskip

The introduction of a non-local diffusion changes everything. Most of the techiniques used in the constant diffusion case cannot be applied. Of course, the gradient structure is still present as well as some symmetry results, as we will see in the sequel. However the phase plane analysis used to construct the sequence of bifurcations of equilibria does not hold anymore as well as some important tools, like comparison and the Lap-Number, which will not hold automatically.

\bigskip

Let us comment, in some more detail, which important properties remain true for the non-local diffusion case and which do not. It is well known that the problem \eqref{eq:C-I} is globally well posed. Denote by $\{T(t):t\geqslant0\}$ the solution operator for it, that is, if $\R^+\ni t \mapsto u(t,u_0)\in H^1_0(0,\pi)$ denotes the global solution of \eqref{eq:C-I}, we write $T(t)u_0=u(t,u_0)$. The family $\{T(t):t\geqslant 0\}\subset C(H^1_0(0,\pi))$ is a semigroup. We can see that this semigroup is gradient, with Lyapunov function $V:H^1_0(0,\pi)\rightarrow\R$ given by
\begin{equation*}
V(u)=\frac{1}{2}\int^\pi_0\left( u_x^2(x) -{\lambda}u^2(x)+\frac{b}{2}u^4(x)\right)dx
\end{equation*} for all  $u\in H^1_0(0,\pi),$
that is,
	\begin{itemize}
	\item[i)] $V$ is continuous;
	\item[ii)] $[0,\infty)\ni t\mapsto V(T(t)u)\in\R$ is non-increasing;
	\item[iii)]  $V(T(t)u)=V(u)$ for all $t\geq 0$ implies that $u$ is a equilibrium for \eqref{eq:C-I}.
	\end{itemize}

Recall that, since the semigroup is gradient, the global attractor if it exists, is given by the set $\mathcal{A}=W^u(\mathcal{E}),$ where $\mathcal{E}$ is the set of equilibria and $W^u(\mathcal{E})$ its unstable manifold. In addition, if the number of equilibria is finite $\mathcal{E}=\{e_1, \cdots, e_n\}$ then 
\begin{equation}\label{eq:description}
W^u(\mathcal{E})=\cup_{i=1}^n W^u(e_i).
\end{equation} 
The later, which is known for the constant diffusion case, has to be proved for the non-local diffusion case.

\bigskip

In \cite{CH-IN}, the authors constructed a function that they called ``time map'' which provides information about the existence of the equilibria and it is related to the energy map
\[E(u,v):=\frac{v^2}{2}+\lambda\frac{u^2}{2} -\frac{bu^4}{4}.\]
 As the parameter $\lambda>0$ varies, we have the following sequence of bifurcations:

    \begin{theorem}\cite{CH-IN} \label{theo:bif-C-I}
Assume that $N^2<\lambda \leq (N+1)^2,$ for some $N\in \mathbb{N}^*.$ Then there exists a set of equilibria $\left\{\phi^\pm_{j,b}: 0\leq j\leq N\right\}$ for the equation \eqref{eq:C-I} 
such that:
        \begin{itemize}
            \item[i)] $\ \phi^+_{0,b}=\phi^-_{0,b}=0;$ 
            \item[ii)] For $1\leq j\leq N,$ $\phi^+_{j,b}$ has $j+1$ zeros in $[0,\pi]$ and they are given by $0,\frac{\pi}{j}, \frac{2\pi}{j},\dots, {\pi};$
            \item[iii)] $\phi^-_{j,b}=-\phi^+_{j,b},$ for all $1\leq j \leq N.$
            \item[iv)] There are no other equilibria for \eqref{eq:C-I}.
        \end{itemize}
    \end{theorem}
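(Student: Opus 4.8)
The plan is to study equilibria of \eqref{eq:C-I} via the classical ODE shooting method, reducing the analysis to the second-order autonomous equation $u'' + \lambda u - bu^3 = 0$ on $(0,\pi)$ with Dirichlet boundary conditions. First I would observe that any equilibrium $u$ satisfies $E(u,u') = \mathrm{const}$ along solutions, where $E$ is the energy map in the excerpt, so each solution lies on a level curve of $E$ in the phase plane $(u,u')$. A nontrivial equilibrium must start at a point $(0,v_0)$ with $v_0 \neq 0$; by the symmetry of the problem under $u \mapsto -u$ and $x \mapsto \pi - x$, it suffices to analyze $v_0 > 0$. The orbit through $(0,v_0)$ is a closed curve around the origin (for $v_0$ below the separatrix threshold determined by the saddle points $(\pm\sqrt{\lambda/b},0)$), and the solution oscillates periodically; the time to return to $u=0$ with opposite-sign derivative is a half-period $T(v_0)/2$, and I would define the ``time map'' $\tau(v_0)$ as the first $x>0$ with $u(x)=0$.

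The key steps are then: (1) derive the integral formula for $\tau(v_0)$ by separating variables in the first-order equation $\tfrac12 (u')^2 = E - \lambda u^2/2 + b u^4/4$, expressing $\tau(v_0)$ as an integral over one quarter-oscillation from $u=0$ to the turning point $u=\alpha(v_0)$ where $u'=0$; (2) show that $\tau$ is a continuous, strictly monotone function of $v_0$ on its domain, with $\tau(v_0) \to \pi/\sqrt{\lambda}$ as $v_0 \to 0^+$ (the linearized period) and $\tau(v_0) \to +\infty$ as $v_0$ approaches the separatrix value; (3) conclude that a nontrivial equilibrium with exactly $j$ interior subintervals between consecutive zeros exists precisely when $j\cdot \tau(v_0) = \pi$ has a solution, i.e. when $\pi/j \in (\pi/\sqrt{\lambda}, \infty)$, which is equivalent to $j^2 < \lambda$, i.e. $1 \le j \le N$; (4) by monotonicity of $\tau$, this $v_0$ is unique, giving exactly one $\phi^+_{j,b}$ for each such $j$, with zeros forced to be equally spaced at $k\pi/j$ by the autonomy (translation invariance) of the ODE; (5) record that $\phi^-_{j,b} = -\phi^+_{j,b}$ by the odd symmetry, that $\phi^+_{0,b}=\phi^-_{0,b}=0$ is the trivial equilibrium, and that exhausting all $j$ with $j^2<\lambda$ together with uniqueness for each $j$ accounts for every equilibrium, proving (iv).

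The main obstacle I expect is step (2): establishing strict monotonicity of the time map $\tau(v_0)$. The naive change of variables in the integral does not make monotonicity manifest because both the integrand and the limit of integration depend on $v_0$; the standard remedy is to rescale the variable of integration so the turning point becomes fixed (substitute $u = \alpha(v_0)\,w$ with $w \in [0,1]$), after which $\tau$ becomes an integral of an explicit function of $w$ and of $\alpha^2$, and one differentiates under the integral sign and checks the sign. For the pure cubic nonlinearity this computation is classical and the sign is favorable (the period is increasing in amplitude for the focusing Duffing-type potential here), but it does require a careful estimate. The remaining steps are comparatively routine: continuity and the boundary limits of $\tau$ follow from dominated convergence and from analyzing the integrable singularity at the turning point, and the counting argument in steps (3)–(5) is elementary once monotonicity and the endpoint behavior of $\tau$ are in hand.
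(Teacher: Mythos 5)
Your proposal is correct and is essentially the classical time-map/phase-plane argument of Chafee and Infante, which is exactly the approach the paper attributes to \cite{CH-IN} (the theorem is quoted there without proof, with the energy map $E(u,v)=\frac{v^2}{2}+\lambda\frac{u^2}{2}-\frac{bu^4}{4}$ and the time map playing precisely the roles you describe). The monotonicity of $\tau$ and its limits $\pi/\sqrt{\lambda}$ and $+\infty$ are the standard key points, and your counting of $j$ with $j^2<\lambda$ matches the statement.
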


\bigskip

The phase plane analysis that led to the results of Theorem \ref{theo:bif-C-I} will not apply to the non-local diffusion as or to the non-autonomous problem. To overcome this difficultness we have pursued a different approach based on comparison results (which do not hold automatically, but we were able to prove) to construct subspaces of $H^1_0(0,\pi)$ which are positivelly invariant and used the gradient structure of the semigroup.

\bigskip

A similar approach has been used to study the inner structure of pullback attractors and uniform attractors (see \cite{Bcv,CLR-PAMS}) for a non-autonomous version of  \eqref{eq:C-I}. In order to describe the non-autonomous problem pursued in these works we will need to introduce some terminology.

\bigskip

Consider $(X,\Vert\cdot \Vert_X)$ a Banach space and denote $C(X)$ the space of continuous functions from $X$ into $X.$ If $\mathcal{P}=\{(t,s) \in \mathbb{R}\times\mathbb{R}: t\geq s\}$ we define
	\begin{defi}
	An evolution process $\{S(t,s) : (t,s)\in \mathcal{P}\} \subset C(X)$ is a family of maps that satisfies the following conditions:
		\begin{itemize}
		\item[i)] $S(t,t)=I_X,$ for all $t \in \mathbb{R},$ where $I_X$ denote the identity in $C(X);$
		\item[ii)] $S(t,s) S(s, \tau)=S(t,\tau),$ for all $t,s,\tau \in \mathbb{R}$ with $t\geq s\geq \tau;$
		\item[iii)] $\mathcal{P}\times X \ni (t,s,x)\longmapsto S(t,s)x$ is a continuous map.
		\end{itemize}
	\end{defi}
When $S(t,s)=S(t-s,0),$ for all $(t,s) \in \mathcal{P},$ the process will be called an autonomous process or semigroup and we will prefer the notation $\{T(t): t \geq 0\}$ where $T(t)=S(t,0).$

A global solution for the process $\{S(t,s): (t,s)\in \mathcal{P}\}$ is a function $\xi: \mathbb{R}\longrightarrow X$ such that \[S(t,s)\xi(s)=\xi(t), \quad \forall (t,s) \in \mathcal{P}.\]
If the set $\{\xi(t): t \in \R\}$ is bounded, $\xi$ is called a bounded solution.

\bigskip

Usually, we are interested in find out subsets of our space $X$ that provides information about the asymptotic behavior of the solutions and for that, we need the concept of a pullback attractor.

\begin{defi}
A family $\{A(t):t \in \mathbb{R}\}\!\subset\! X$ is the pullback attractor of $S(\cdot,\cdot)\subset C(X)$ if
\begin{itemize}
  \item[i)] $A(t)$ is a compact set, for each $t\in \mathbb{R};$
  \item[ii)] $S(t,s)A(s)=A(t)$ for all $t\geq s;$
  \item[iii)] The family $\{A(t):t \in \mathbb{R}\}$ pullback-attracts bounded sets of $X,$ that is, for each bounded $B \subset X,$ we have that \[\displaystyle\sup_{b \in B} d_X\left(S(t,s)b, A(t)\right)\longrightarrow 0 \mbox{ as } s\rightarrow -\infty;\]
    \item[iv)] $\{A(t):t \in \mathbb{R}\}$ is the minimal family of closed sets that satisfies the condition iii).
\end{itemize}
\end{defi}

If we assume that $\bigcup_{s\in \R} A(s)$ is a bounded set, then we have the following characterization for the pullback attractor: for all $t \in \R,$
\begin{equation}\label{eq:pa-charac}
A(t)=\left\{\xi(t):\ \xi:\R \longrightarrow X \mbox{ is a bounded global solution of } S(\cdot,\cdot) \right\}.
\end{equation}

\begin{defi}
A set $\mathbf{A}$ is the uniform attractor of $S(\cdot,\cdot)\subset C(X)$ if it is a compact subset of $X$ with the property that
$$
\sup_{\tau\in \R}\sup_{b\in B}d_X(S(t+\tau,\tau)b,\mathbf{A})\stackrel{t\to\infty}{\longrightarrow} 0
$$
for any $B\subset X$ bounded.
\end{defi}

For more details about process and pullback attractors, see \cite{Carvalho et al.}.

\bigskip

Consider the non-autonomous version of  \eqref{eq:C-I} given by
\begin{equation}\label{eq:carvalho}
\begin{cases}
u_t=u_{xx}+\lambda u- \beta(t)u^3, &\text{$t>0$, $x\in(0,\pi)$},\\
u(0,t)=u(\pi,t)=0, &\text{$t\geqslant 0$},\\
u(x,0)=u_0(x),&\text{$x\in(0,\pi)$ and $u_0\in H^1_0(0,\pi)$},
\end{cases}	
\end{equation}
where $\beta:\mathbb{R}\longrightarrow \mathbb{R}$ is a globally Lipschitz map and for all $t \in \mathbb{R},$ we have $b_1\leq \beta(t)\leq b_2,$ for some constants $b_1, b_2>0.$

\bigskip

In \cite{CLR-PAMS} the authors prove that if $N^2<\lambda \leqslant (N+1)^2$ there are $2N+1$ non-autonomous equilibria (including the zero equilibrium) for \eqref{eq:carvalho}, showing that the same sequence of bifurcation observed in the autonomous case, for \eqref{eq:C-I}, is also present in non-autonomous case. In the quest to find global bounded solutions that play the role of equilibria, in the description of attractors for autonomous gradient systems (as in \eqref{eq:description}), arise these non-autonomous equilibria (global bounded solutions which are non-degenerate at plus and minus infinity). In \cite{Bcv}, it is shown that the non-autonomous equilibria play such role (they are the key factor in the construction of the `lifted-invariant isolated invariant' sets to which the solutions converge and the rest of the uniform attractor correspond to `connections' between two of these, see \cite{Bcv} for more details.


%
%
%
%

\bigskip

One natural development was ask what happens with the structure of autonomous and non-autonomous attractors when the diffusion is non-local Kirchhof-type term $a$ as in \eqref{1.1}. We consider our equation \eqref{1.1} in the particular case when $\beta$ is time independent and when $\beta$ is time dependent.  That simple natural extension leads to the need of completely different techiniques than those used in \eqref{eq:C-I} where the authors use a phase plane analysis and a time map obtained to obtain the sequence of bifurcations depending on $\lambda$. An important aspect, that deserves to be pointed out, is that the operator $a\left(\|u_x\|^2\right)u_{xx}$ is non-linear and non-local, leading to several interesting and non-trivial mathematical questions about monotonicity (see \cite{BR,B,CA-GE}), comparison principles (c.f. \cite{CA-GE,AL-CO}), symmetry, (odd) extension and hyperbolicity among others.

\bigskip

To obtain the same sequence of bifurcations when $\beta$ is time dependent, the authors in \cite{CLR-PAMS} use the existence of a positive non-degenerate global solution (inspired in the work of \cite{RB-VL}) and invariant regions in $H_0^1(0,\pi)$ to construct the sequence of bifurcations. That technique can also be used to obtain, in a different way, the sequence of bifurcations of equilibria for \eqref{eq:C-I}. Its application to obtain the sequence of bifurcations  of equilibria for \eqref{eq:C-I} requires that the first eigenvalue of $-u_{xx}+\lambda u$ with Dirichlet boundary conditions be positive, that comparison results are available and the use of the gradient structure of the problem. Since \eqref{1.1} has a non-linear principal part, this analysis will require several adjustments and new ideas to be applied.

\bigskip

In section 2, we will develop our comparison result that will be essential to deal with the non-autonomous problem. In this section this will construct some bounded and positive invariant regions under the action of the process. In the section $3$, we will present our central result, Theorem \ref{theo:exist-pa}. In section 4, we sharpen the bifurcation result, using a new technique, obtaining that the sequence of bifurcations happen, in the autonomous case, at $a(0)N^2$, under some mild additional conditions on the diffusivity function $a$ and without using comparison. Finally, in section 5, we present some open questions for further investigation as well as the difficulties involved in each of these questions.




\section{Comparison results}
  Suppose that $u$ is a solution of \eqref{1.1}. For that, we are going to make a change in the variable $t$ and consider 
$\displaystyle\phi(t)=\int_{0}^{t}a(\Vert u_x(x,\sigma)\Vert^2)d\sigma$ and $w(x,\phi(t))=u(x,t)$ (see \cite{CH-VA-VC}), then 
 \begin{equation}\label{eq:nl_changed}
\left\{
  \begin{array}{ll}
      w_t = w_{xx} + \frac{\lambda w-\beta(\phi^{-1}(t))w^3}{a(\Vert w_x\Vert^2)}   \ \ x\in (0, \pi), t>s,\\
    w(0, t)=w(\pi, t)=0,  \ \  t>s,\\
     w(x, \phi(s))=u_0(x),\ \ x\in (0, \pi),
  \end{array}
\right.
\end{equation}

Problem \eqref{eq:nl_changed} is locally well-posed and the solutions are jointly continuous with respect time and initial conditions, see \cite{Carvalho et al.}.
By using the comparison result that we will develop, we can also prove that the solutions are globally defined. Hence, we can define the solution operators in the following way: if $u(t,s,u_0)$ is the solution of \eqref{eq:nl_changed} we write $S_\beta(t,s)u_0=u(t,s,u_0)$. That defines a solution operator family 
$\{S_\beta(t,s): t\geqslant s\}\subset C(H^1_0(0,\pi))$ associated to \eqref{eq:nl_changed}.

\bigskip

Note that $H^1_0(0,\pi)$ is an ordered Banach space with the following partial ordering structure:
\[ u\geq v \mbox{ in } H^1_0(0,\pi) \Leftrightarrow u(x)\geq v(x) \mbox{ a. e. for } x \in (0,\pi). \]

Using that order, we can define the positive cone as
\[\mathcal{C}=\{ u \in H^1_0(0,\pi) : \ u\geq 0 \}.\]

\bigskip

Consider the auxiliary initial boundary value problems
\begin{equation} \label{eq:b_1}
  \left\{
  \begin{array}{ll}
        z_t =  z_{xx} +\lambda z -\frac{b_1}{2} z^3  \\
    z(0, t)=z(\pi, t)=0,  \ \  t>0,\\
     z(\cdot, 0)=z_0(\cdot)\in H^1_0(0,\pi),
  \end{array}
\right.
\end{equation}
and
  \begin{equation} \label{eq:v/2}
  \left\{
  \begin{array}{ll}
       v_t = v_{xx} +\frac{\lambda }{2}v -b_2 v^3  \\
    v(0, t)=v(\pi, t)=0,  \ \  t>0,\\
     v(\cdot, 0)=v_0(\cdot)\in H^1_0(0,\pi).
  \end{array}
\right.
\end{equation}

\bigskip

It is well know that \eqref{eq:b_1} and \eqref{eq:v/2} are globally well posed and if $\R^+\ni t \mapsto z(t,z_0)\in H^1_0(0,\pi)$ and $\R^+\ni t \mapsto v(t,v_0)\in H^1_0(0,\pi)$ denote the solutions of \eqref{eq:b_1} and \eqref{eq:v/2}, we define the semigroups $\{T_1(t):t\geqslant 0\}$ and $\{T_2(t):t\geqslant 0\}$ by $T_1(t)z_0=z(t,z_0)$ and $T_2(t)v_0=v(t,v_0)$, $t\geqslant 0$, see \cite{Carvalho et al.}.

\bigskip

This section will be dedicated to the proof of the following result:

\begin{theorem} \label{theo:comparison}
With the above notation, if $u_0\leq u_1\leq u_2$ then
\begin{equation}\label{eq:comparison_la}
 T_2(t-s)u_0\leq S_\beta(t,s)u_1 \leq T_1(t-s) u_2, \quad \forall (t,s) \in \mathcal{P}.
\end{equation}
\end{theorem}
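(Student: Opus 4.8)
The plan is to establish the two inequalities in \eqref{eq:comparison_la} by a comparison argument for scalar parabolic equations, treating $w=S_\beta(t,s)u_1$ (the solution of the time-changed problem \eqref{eq:nl_changed}) as lying between suitable sub- and supersolutions constructed from the solutions of \eqref{eq:v/2} and \eqref{eq:b_1}. The key point is that although the spatial operator in \eqref{1.1} is nonlinear and non-local, after the Kirchhoff time change the equation \eqref{eq:nl_changed} has the \emph{linear} principal part $w_{xx}$, so the classical maximum principle for uniformly parabolic equations applies to differences of solutions once we control the zeroth-order terms. First I would record the elementary pointwise bounds that come from the hypotheses: since $a$ takes values in $[1,2]$ and $\beta$ in $[b_1,b_2]$, for any real number $r$ and any admissible values of $\|w_x\|^2$ and $t$ we have
\begin{equation*}
\frac{\lambda}{2}\,r - b_2 r^3 \;\le\; \frac{\lambda r - \beta(\phi^{-1}(t))\,r^3}{a(\|w_x\|^2)} \;\le\; \lambda r - \frac{b_1}{2}\,r^3 \qquad \text{whenever } r\ge 0,
\end{equation*}
with the inequalities reversed for $r\le 0$; the division by $a\in[1,2]$ shrinks the positive linear part $\lambda r$ toward $\lambda r/2$ and shrinks the cubic coefficient toward $b_1/2$ in one direction and $b_2$ in the other. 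This is the algebraic heart of why \eqref{eq:b_1} and \eqref{eq:v/2} were chosen with exactly those coefficients.

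Next I would set up the comparison for the upper bound. Let $z(\cdot)=T_1(t-s)u_2$ solve \eqref{eq:b_1} with data $u_2\ge u_1$, and let $w=S_\beta(t,s)u_1$. Writing the equation for $w$ as $w_t = w_{xx} + g(t,\|w_x\|^2,w)$ with $g(t,\theta,r):=(\lambda r-\beta(\phi^{-1}(t))r^3)/a(\theta)$, I want to show $w$ is a subsolution of \eqref{eq:b_1}. The obstruction is that the bound $g(t,\theta,r)\le \lambda r - \tfrac{b_1}{2}r^3 =: f_1(r)$ only holds for $r\ge 0$, whereas a priori $w$ can be negative. So the argument must be run carefully: I would first establish that $w(x,t)\le z(x,t)$ fails to be trivial only where $w>0$, and use that on the set $\{w\le 0\}$ the desired inequality $w\le z$ may still fail — hence one cannot reduce directly to the scalar ODE comparison on the nonlinearities. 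The cleaner route is to compare $w$ with $z$ via the difference $\psi=w-z$, which satisfies $\psi_t=\psi_{xx}+g(t,\|w_x\|^2,w)-f_1(z)$; adding and subtracting $f_1(w)$ and writing $f_1(w)-f_1(z)=c(x,t)\psi$ with $c$ bounded on bounded sets (mean value theorem, using local boundedness of solutions), we get $\psi_t=\psi_{xx}+c(x,t)\psi + [\,g(t,\|w_x\|^2,w)-f_1(w)\,]$. The bracket is $\le 0$ precisely where $w\ge 0$. To close the argument I would therefore run the comparison on an invariant region: show first (this is where the \emph{positive invariant regions constructed in Section 2} come in, as announced) that the ordering is preserved on the cone, i.e. handle the case $0\le u_1$ first, where $w\ge 0$ for all time by the same maximum-principle argument applied to $w$ itself (the nonlinearity vanishes at $0$, so $0$ is a subsolution), and then the bracket is genuinely $\le 0$ and the standard parabolic maximum principle with zeroth-order coefficient $c$ gives $\psi\le 0$, i.e. $w\le z$. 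The general case $u_0\le u_1\le u_2$ with $u_1$ possibly sign-changing is then obtained by the lower bound argument below combined with the observation that once $T_2(t-s)u_0\le S_\beta(t,s)u_1$ is known, $S_\beta(t,s)u_1$ inherits a lower barrier, and symmetrically an upper one.

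For the lower bound $T_2(t-s)u_0\le S_\beta(t,s)u_1$ I would argue symmetrically: with $v=T_2(t-s)u_0$ solving \eqref{eq:v/2} and $f_2(r):=\tfrac{\lambda}{2}r-b_2r^3$, the bound $g(t,\theta,r)\ge f_2(r)$ holds for $r\ge 0$ and reverses for $r\le 0$, so $v$ should be a subsolution of \eqref{eq:nl_changed}; the difference $\psi=v-w$ satisfies a parabolic equation with a bounded zeroth-order term plus a bracket $f_2(w)-g(t,\|w_x\|^2,w)$ that has a favorable sign exactly where $w\ge0$ — or, running it the other way, where $v\ge0$. Again one first treats the cone case and then bootstraps. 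Throughout, the comparison principle I invoke is the standard one for the linear-principal-part operator $\partial_t-\partial_{xx}-c(x,t)$ on $(0,\pi)\times(s,T]$ with zero Dirichlet data, valid on any finite time interval; combining it with the global existence (which, as the text notes, follows once a priori bounds are in hand) yields the inequalities for all $(t,s)\in\mathcal{P}$. The main obstacle, as indicated, is the sign condition on the nonlinearity bounds: the comparisons between the various nonlinearities are one-sided and hold only on $\{r\ge 0\}$, so the proof genuinely needs the positively invariant cone from Section 2 to get started, after which a sandwich/bootstrap propagates the ordering to sign-changing data. A secondary technical point is the dependence of $g$ on $\|w_x\|^2$ and on $\phi^{-1}(t)$: both are harmless because the bounds $a\in[1,2]$, $\beta\in[b_1,b_2]$ are uniform in those arguments, so the non-local and non-autonomous features never enter the maximum-principle step.
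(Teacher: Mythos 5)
Your algebraic sandwich of the nonlinearities is exactly the paper's inequality \eqref{eq:comparison}, and your cone-case argument is sound, but your mechanism differs from the paper's: instead of linearizing the difference $\psi=w-z$ and invoking the classical maximum principle for $\partial_t-\partial_{xx}-c(x,t)$, the paper applies the abstract comparison result Theorem \ref{theo:CLR6.41}(iii) twice, which rests on the positivity of the resolvent of $A=-\partial_{xx}$ together with the fact that, after adding $\gamma I$ on a ball of $H^1_0(0,\pi)\hookrightarrow L^\infty(0,\pi)$, the non-local nonlinearity is squeezed between the increasing local maps $h_1(u)=\gamma u+\lambda u-\tfrac{b_1}{2}u^3$ and $h_2(u)=\gamma u+\tfrac{\lambda}{2}u-b_2u^3$. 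That abstract route needs no pointwise regularity and is the intended proof; your maximum-principle version is workable in one space dimension but proves nothing more.

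The genuine gap is your passage from the cone case to sign-changing $u_1$. The proposed ``bootstrap'' (once the lower barrier is known, the upper one follows, and vice versa) is circular, and no bootstrap can exist: as you yourself observe, the orderings $g\le f_1$ and $g\ge f_2$ reverse on $\{r<0\}$, and the conclusion \eqref{eq:comparison_la} is actually incompatible with the odd symmetry of the three equations. All three flows commute with $u\mapsto -u$, so applying \eqref{eq:comparison_la} to $-\phi$ with $\phi\ge0$ would yield $T_1(t-s)\phi\le S_\beta(t,s)\phi\le T_2(t-s)\phi$, the reverse of what the same statement gives for $\phi$; both can hold only if the three solutions coincide, which fails already when $\phi$ is the positive equilibrium of \eqref{eq:b_1}. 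So the theorem has to be read with a sign restriction --- the leading ``$0\le$'' in the paper's \eqref{eq:comparison} quietly imposes $u\ge0$, and every use of the result in Section 3 is on subintervals where the solutions have a fixed sign, with the chain reversed on the negative pieces. Your cone-case proof is therefore the correct and complete content of the statement; drop the bootstrap and instead record the reversed inequality for data in $-\mathcal{C}$, which is what the applications actually require.
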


We will use the denomination ``positive solution'' for a global solution $\xi$ such that $\xi(t)\in \mathcal{C}$ for all $t \in \mathbb{R}$. If there exists a $\phi \in \mathcal{C}\cap\{\psi\in C^1(0,\pi):\psi'(0)\cdot\psi'(\pi)<0\} $ and $t_0 \in \mathbb{R}$ such that $\phi\leq \xi(t)$ for all $t\leq t_0$  (for all $t\geq t_0$) then $\xi$ will be called non-degenerate as $t\rightarrow -\infty$ 	(as $t\rightarrow +\infty$).

	\begin{defi}\label{def:na_equil}
	 A positive global solution $\xi$ of $S_\beta(\cdot, \cdot)$ is called a non-autonomous equilibrium if the  zeros of $\xi(t)$ are the same for all $t \in \mathbb{R}$ and $\xi$ is non-degenerate as $t\rightarrow \pm \infty.$
	\end{defi}

\bigskip

If we write \eqref{eq:nl_changed} in the abstract form
   \begin{equation}\label{eq:semilinear}
    \dot{u}+Au=f(t,u)
    \end{equation}
where
    \begin{itemize}


      \item[(1)]  $A:D(A)\subset L^2(0,\pi) \longrightarrow L^2(0,\pi)$ is the linear operator defined by $D(A)=H^2(0,\pi)\cap H^1_0(0,\pi)$ and $Au=-u_{xx}$, $u\in D(A)$. It is clear that $(0,\infty)\subset \rho(A)$ (resolvent of $A$) and that for all element $u_0 \in L^2(0,\pi)$ with $u_0\geq 0$ we have
\[(\lambda -A )^{-1}u_0 \geq 0, \quad \forall \lambda >0.\]
We express this fact by saying that $A$ has positive resolvent.

      \item[(2)] Consider $f:\R\times H^1_0(0,\pi) \longrightarrow L^2(0,\pi)$ a function such that for all $r>0$ we can find $\gamma(r)>0$ such that for all $t \in [t_0,t_1],$ the function $\gamma I_{L^2(0,\pi)} +f(t, u)$ is positive for all $u \in \mathcal{C}\cap B^{H^1_0(0,\pi)}_r(0).$

    \end{itemize}

Denote by $u_f(t,t_0,u_0)$ the solution of \eqref{eq:semilinear} for $t\geq t_0$ for which the solution is defined. The following theorem provides us a comparison result

\begin{theorem}\cite[Theorem 6.41]{Carvalho et al.}\label{theo:CLR6.41} If $A$ is as above and $f,g,$ and  $h$ are functions that satisfies (2). Then, we have the following
        \begin{itemize}
          \item[(i)] If for every $r>0$ there is a constant $\gamma=\gamma(r) >0$ such that $f(t,\cdot)+\gamma I$ is increasing in $B_{H^1_0(0,\pi)}(0,r),$ for all $t \in [t_0,t_1]$ and $u_0,u_1\in H^1_0(0,\pi)$ with $u_0\geq u_1,$ then $u_f(t,t_0,u_1)\geq u_f(t,t_0,u_0)$ as long as both solutions exist.
          \item[(ii)] If $f(t,\cdot)\geq g(t,\cdot)$ for all $t \in \R$ and $u_0\in H^1_0(0,\pi)$ then $u_f(t,t_0, u_0)\geq u_g(t,t_0,u_0)$ as long as both solutions exist.
          \item[(iii)] If $f,g $ are such that for every $r>0$ there exist a constant $\gamma=\gamma(r)>0$ and an increasing function $h(t,\cdot)$ such that, for every $t\in [t_0,t_1]$
            \[f(t,\cdot)+\gamma I \geq h(t,\cdot) \geq g(t,\cdot)+\gamma I\]
            in $B_{X^1}(0,r)$ and $u_0,u_1\in X^1$ with $u_0\geq u_1,$ then $u_f(t,t_0,u_0)\geq u_g(t,t_0,u_1)$ as long as both exist.
        \end{itemize}
    \end{theorem}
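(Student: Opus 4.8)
The plan is to treat part~(i) as the fundamental building block and to deduce (ii) and (iii) from it by a supersolution argument and a chaining through an auxiliary equation. Two structural facts drive everything. \emph{First}, the positive resolvent hypothesis~(1) forces the linear semigroup $\{e^{-At}:t\geq 0\}$ generated by $-A$ to be order preserving: from the exponential formula $e^{-At}u=\lim_{n\to\infty}\bigl(\tfrac{n}{t}(\tfrac{n}{t}+A)^{-1}\bigr)^{n}u$, the fact that each resolvent maps $\mathcal{C}$ into $\mathcal{C}$, and the closedness of the cone, the limit stays in $\mathcal{C}$. \emph{Second}, the shift trick: adding $\gamma u$ to both sides of \eqref{eq:semilinear} rewrites it as $\dot u+(A+\gamma I)u=f(t,u)+\gamma u$, where $A+\gamma I$ again has positive resolvent (so $e^{-(A+\gamma I)t}=e^{-\gamma t}e^{-At}$ is still order preserving) and the new nonlinearity $g(t,\cdot):=f(t,\cdot)+\gamma I$ is \emph{increasing} on $B_{H^1_0(0,\pi)}(0,r)$ by hypothesis~(2).

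For part~(i) I would work with the mild formulation in $X^{1/2}=H^1_0(0,\pi)$,
\[
u(t)=e^{-(A+\gamma I)(t-t_0)}u_0+\int_{t_0}^{t}e^{-(A+\gamma I)(t-\sigma)}\,g(\sigma,u(\sigma))\,d\sigma ,
\]
and run the monotone successive-approximation scheme $u^{0}(t)=e^{-(A+\gamma I)(t-t_0)}u_0$ and $u^{k+1}(t)=e^{-(A+\gamma I)(t-t_0)}u_0+\int_{t_0}^{t}e^{-(A+\gamma I)(t-\sigma)}g(\sigma,u^{k}(\sigma))\,d\sigma$, which converges to the unique local mild solution by the standard contraction argument. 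Comparing the iterates for ordered data $u_0\geq u_1$, an induction on $k$ gives $u^{k}_{u_0}(t)\geq u^{k}_{u_1}(t)$: the leading term is ordered because $e^{-(A+\gamma I)(t-t_0)}$ is order preserving, and the integral term is ordered because $g(\sigma,\cdot)$ is increasing (so $g(\sigma,u^{k}_{u_0})\geq g(\sigma,u^{k}_{u_1})$) and the semigroup under the integral again preserves $\mathcal{C}$. Since $\mathcal{C}$ is closed, letting $k\to\infty$ yields order preservation of the solution map on a small interval $[t_0,t_0+\tau]$; because both $\tau$ and $\gamma=\gamma(r)$ depend only on a bound $r$ for the $H^1_0(0,\pi)$-norm, which stays finite on compact time intervals, a continuation argument propagates the inequality as long as both solutions exist.

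For part~(ii) I would use that $f(t,\cdot)\geq g(t,\cdot)$ makes $u_f$ a supersolution of the $g$-equation. Writing $w:=u_f(\cdot,t_0,u_0)-u_g(\cdot,t_0,u_0)$ in mild form with the shifted operator and $G(t,\cdot):=f(t,\cdot)+\gamma I$,
\[
w(t)=\int_{t_0}^{t}e^{-(A+\gamma I)(t-\sigma)}\Bigl[\bigl(G(\sigma,u_f(\sigma))-G(\sigma,u_g(\sigma))\bigr)+\bigl(f(\sigma,u_g(\sigma))-g(\sigma,u_g(\sigma))\bigr)\Bigr]d\sigma ,
\]
the second bracket is $\geq 0$ by hypothesis, while the first bracket has the sign of $w$ by the increasing character of $G$; running the same monotone iteration (or a Gronwall estimate against the positive kernel) forces $w(t)\in\mathcal{C}$, i.e.\ $u_f\geq u_g$. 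For part~(iii) I set $\tilde f:=h-\gamma I$, so that $\tilde f+\gamma I=h$ is increasing and $\tilde f$ satisfies the hypothesis of~(i). The inequalities $f+\gamma I\geq h$ and $h\geq g+\gamma I$ read $f\geq\tilde f$ and $\tilde f\geq g$, so (ii) gives $u_f(\cdot,t_0,u_0)\geq u_{\tilde f}(\cdot,t_0,u_0)$ and $u_{\tilde f}(\cdot,t_0,u_1)\geq u_g(\cdot,t_0,u_1)$, while (i) applied to $\tilde f$ gives $u_{\tilde f}(\cdot,t_0,u_0)\geq u_{\tilde f}(\cdot,t_0,u_1)$; chaining the three yields $u_f(\cdot,t_0,u_0)\geq u_g(\cdot,t_0,u_1)$.

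The main obstacle is the interplay of two function spaces: the ordering $\geq$ is an $L^2$ (almost everywhere) order, whereas the mild solutions are constructed and the Picard iteration converges in $X^{1/2}=H^1_0(0,\pi)$. One must verify that the analytic smoothing semigroup carries the $L^2$-cone into the $H^1_0(0,\pi)$-cone and that $\mathcal{C}$ is closed in the topology in which the iterates converge, so that limits of ordered iterates remain ordered. Compounding this, the monotonicity of $f+\gamma I$ is only \emph{local} (valid on $B_r$), so one must confine all iterates and both compared solutions to a common ball on each compact time subinterval in order to use a single constant $\gamma(r)$; it is precisely this a priori localization that forces the ``as long as both solutions exist'' continuation step and is where the estimates require the most care.
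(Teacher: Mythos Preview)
The paper does not give its own proof of this statement: it is quoted verbatim as \cite[Theorem~6.41]{Carvalho et al.} and used as a black box in the proof of Theorem~\ref{theo:comparison}. There is therefore nothing in the present paper to compare your argument against; the proof lives in the Carvalho--Langa--Robinson monograph.

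That said, your sketch is essentially the standard argument from that reference: positivity of the resolvent $\Rightarrow$ positivity of $e^{-At}$ via the exponential formula, shift by $\gamma$ to make the nonlinearity order preserving, monotone Picard iteration in the mild formulation, and a continuation argument using local-in-$r$ control of $\gamma(r)$. Your reduction of (iii) to (i)+(ii) via the auxiliary nonlinearity $\tilde f=h-\gamma I$ is exactly the intended chaining.

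One point deserves care. In your treatment of (ii) you invoke ``the increasing character of $G$'' with $G=f+\gamma I$, but part (ii) as stated here carries no monotonicity hypothesis on $f$ or $g$ (only the positivity-on-the-cone condition (2)). Your displayed identity for $w=u_f-u_g$ is circular as written: the sign of $G(\sigma,u_f)-G(\sigma,u_g)$ is the sign of $w$ only \emph{after} you know $w\geq 0$. The usual fix is either to assume the monotonicity of $f+\gamma I$ (or of $g+\gamma I$) in (ii) as well---which is how the result is typically stated in the reference---or to argue via an iteration/continuity-in-time argument starting from $w(t_0)=0$, using only that the second bracket is nonnegative and that the semigroup kernel is positive; your parenthetical ``or a Gronwall estimate against the positive kernel'' hints at this but would need to be made precise without silently importing monotonicity.
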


\bigskip
{\bf Proof of the theorem \eqref{theo:comparison}.} Observe that these equations are variations of \eqref{eq:C-I} and for $R>0$ we can find $\gamma(R)>0$ such that if $|u|\leq R$ and $t \in [s,t_0] $ then
\begin{equation}\label{eq:comparison}
0\leq \gamma u +\frac{\lambda }{2}u -b_2 u^3\leq \gamma u +\frac{\lambda u -\beta(\phi^{-1}(t))u^3}{a(\|u_x\|^2)}\leq \gamma u + \lambda u -\frac{ b_1}{2} u^3
\end{equation}
with $ \gamma u +\frac{\lambda u}{2} -b_2 u^3$ and $\gamma u + \lambda u  -\frac{ b_1}{2} u^3 $ increasing.

We now apply Theorem \ref{theo:CLR6.41} twice to compare solutions of \eqref{eq:nl_changed}, \eqref{eq:b_1} and \eqref{eq:v/2}. To that end, we define
\begin{equation}
\begin{split}
g_1(t,u)(x)&=\frac{\lambda u(x)-\beta(\phi^{-1}(t))u(x)^3}{a(\| u_x\|^2)}\quad
h_1(t,u)(x)=f_1(t,u)(x)=\lambda u(x)-\frac{b_1}{2}u(x)^3
\\
f_2(t,u)(x)&=\frac{\lambda u(x)-\beta(\phi^{-1}(t))u(x)^3}{a(\| u_x\|^2)}\quad
g_2(t,u)(x)=h_2(t,u)(x)=\frac{\lambda}{2} u(x)-b_2 u(x)^3
\end{split}
\end{equation}

Now, noticing that $H^1_0(0,\pi)$ is embedded in $L^\infty(0,\pi)$ and using \eqref{eq:comparison}, we are ready to apply Theorem \ref{theo:CLR6.41}, item iii) twice to compare \eqref{eq:nl_changed} with \eqref{eq:b_1} and \eqref{eq:v/2}.

\bigskip

In the next section we will use this comparison result to construct equilibria for \eqref{1.1}.

\section{The non-autonomous equation}

The process $\{S_\beta(t,s): t\geqslant s\}$ defined by \eqref{eq:nl_changed} admits a pullback attractor (see \cite{ACR-B}).

Observe that, by Theorem \ref{theo:bif-C-I}, if $\lambda >2,$ we can find a positive equilibrium $\phi_{1,b_1}^+$ for \eqref{eq:b_1} and a positive equilibrium $\phi_{1,b_2}^+$ for \eqref{eq:v/2}.

Using the Theorem \ref{theo:comparison} and the fact that $T_1(\cdot)$ is gradient, we have that
\[ \phi^+_{1,b_2}=T_2(t)\phi^+_{1,b_2}\leq T_1(t)\phi^+_{1,b_2}\stackrel{t\rightarrow +\infty}{\longrightarrow} \psi,\]
for some positive equilibrium $\psi$ of \eqref{eq:b_1}.

By the uniqueness of the positive equilibrium for \eqref{eq:b_1}, we conclude that $\psi =\phi^+_{1,b_1}.$

Therefore, $\phi^+_{1,b_2}\leq \phi^+_{1,b_1}$. 
Define the set 
\[X^+_1=\left\{
                \begin{array}{cl}
                 u \in H^1_0(0,\pi) \ : \ & \phi^+_{1,b_2}(x)\leqslant u(x) \leqslant \phi^+_{1,b_1}(x)\\
                                          & \mbox{ and }  u(x)=u(\pi-x) \mbox{ in } (0,\pi)
                 \end{array}
        \right\}.\]


\subsection{Construction of a positive non-autonomous equilibrium}

Our idea is to construct a positive non-autonomous equilibrium, see Definition \ref{def:na_equil}. For that, we will prove that $X^+_1$ is positively invariant, that is, $S(t,s)X^+_1\subset X^+_1,$ for all $(t,s) \in \mathcal{P}.$


For $x \in (0,\pi)$ and $u_0 \in X^+_1$ since $T_i(t-s)\phi^+_{1,b_i}=\phi^+_{1,b_i}, i=1,2,$ we have
\[\phi^+_{1,b_2}(x)\leqslant T_2(t-s)u_0\leqslant S(t,s)u_0\leqslant  T_1(t-s)u_0\leqslant \phi^+_{1,b_1}(x).\]

If $u(t,s, u_0)(x):= S(t,s)u_0(x),$ since $u_0(x)=u_0(\pi-x)$ for $x \in (0,\pi)$ both $u(t,s,u_0)(\cdot)$ and $u(t,s,u_0)(\pi - \cdot) \in H^1_0(0,\pi)$ with $t \in \R, t\geqslant s$ are solutions for the problem
\eqref{eq:nl_changed} then by uniqueness of solution we conclude that $u(t,s,u_0)(x)=u(t,s,u_0)(\pi-x),$ for all $x \in (0,\pi)$.

\begin{theorem} Suppose $\lambda > 2.$ Then the process $S_\beta(\cdot, \cdot)$ restricted to $X_1^+$ admits a pullback attractor. In particular, there exists a non-autonomous equilibrium in $\mathcal{C}.$
\end{theorem}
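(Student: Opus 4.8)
The plan is to equip $X_1^+$ with the metric of $H^1_0(0,\pi)$, check that $S_\beta(\cdot,\cdot)$ restricts to a process on it, produce a \emph{compact} pullback absorbing family for the restricted process via a parabolic smoothing estimate, invoke the abstract theory to deduce the pullback attractor, and finally read off a non-autonomous equilibrium from a bounded global solution contained in it. First I would record that $X_1^+$ is a nonempty closed subset of $H^1_0(0,\pi)$: it is nonempty because $\phi^+_{1,b_1},\phi^+_{1,b_2}\in X_1^+$ — the reflection symmetry $\phi^+_{1,b_i}(x)=\phi^+_{1,b_i}(\pi-x)$ follows from the uniqueness of the positive equilibrium of \eqref{eq:b_1}, resp. of \eqref{eq:v/2}, together with the invariance of these autonomous problems under $x\mapsto\pi-x$ — and it is closed because the pointwise constraints $\phi^+_{1,b_2}\le u\le\phi^+_{1,b_1}$ and $u(x)=u(\pi-x)$ are stable under $H^1_0$-convergence, via the embedding $H^1_0(0,\pi)\hookrightarrow C[0,\pi]$. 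Since $S_\beta(t,s)X_1^+\subset X_1^+$ was proved above, $\{S_\beta(t,s)|_{X_1^+}:(t,s)\in\mathcal P\}$ is a genuine process on the complete metric space $X_1^+$.

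The heart of the matter is a uniform smoothing bound. Every $u\in X_1^+$ satisfies $\|u\|_{L^\infty}\le M:=\max\{\|\phi^+_{1,b_1}\|_{L^\infty},\|\phi^+_{1,b_2}\|_{L^\infty}\}$, so, since $a\ge 1$ and $b_1\le\beta\le b_2$, the nonlinearity in \eqref{eq:nl_changed}, regarded as the map $f$ of \eqref{eq:semilinear}, obeys $\|f(t,w)\|_{L^2(0,\pi)}\le C_0$ uniformly in $t\in\R$ and in $w\in X_1^+$. Plugging this into the variation-of-constants formula for \eqref{eq:semilinear} and using the analytic estimate $\|A^{3/4}e^{-\tau A}\|\le C\tau^{-3/4}$ with $\int_0^1\tau^{-3/4}\,d\tau<\infty$, one gets that $S_\beta(t,t-1)X_1^+$ is bounded in $D(A^{3/4})$ by a constant independent of $t$. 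Since $D(A^{3/4})$ embeds compactly into $H^1_0(0,\pi)$, the sets $K(t):=\overline{S_\beta(t,t-1)X_1^+}$ (closure in $H^1_0(0,\pi)$) are compact subsets of $X_1^+$ lying in one common ball, and positive invariance gives $S_\beta(t,s)D=S_\beta(t,t-1)S_\beta(t-1,s)D\subset S_\beta(t,t-1)X_1^+\subset K(t)$ whenever $D\subset X_1^+$ and $s\le t-1$. Hence $\{K(t)\}_{t\in\R}$ is a compact pullback absorbing family, and by the standard criterion (see \cite{Carvalho et al.}) the restricted process admits a nonempty pullback attractor $\{A_1^+(t)\}_{t\in\R}$ with $A_1^+(t)\subset K(t)\subset X_1^+$ and $\bigcup_{t\in\R}A_1^+(t)$ bounded.

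For the last assertion I would use the characterization \eqref{eq:pa-charac} for the restricted process, which yields a bounded global solution $\xi:\R\to X_1^+$ of $S_\beta(\cdot,\cdot)$ (alternatively, such a $\xi$ is obtained directly as a subsequential limit of $t\mapsto S_\beta(t,-n)\phi^+_{1,b_1}$ as $n\to\infty$, using the compactness of the $K(t)$ and the continuity of $S_\beta$). Because $\xi(t)\in X_1^+$ for every $t$, we have $\phi^+_{1,b_2}(x)\le\xi(t)(x)\le\phi^+_{1,b_1}(x)$ on $(0,\pi)$; consequently $\xi(t)\in\mathcal C$ for all $t$, the zero set of $\xi(t)$ in $[0,\pi]$ equals $\{0,\pi\}$ for every $t$ (hence is the same for all $t$), because $\phi^+_{1,b_1}$ and $\phi^+_{1,b_2}$ vanish only at $0$ and $\pi$, and, taking $\phi:=\phi^+_{1,b_2}$ — which lies in $\mathcal C$, is $C^1$ up to the boundary, and satisfies $\phi'(0)>0>\phi'(\pi)$ (since $\phi>0$ in $(0,\pi)$, $\phi(0)=\phi(\pi)=0$, and for the second order ODE solved by $\phi$ only the trivial solution has a double zero) — we have $\phi\le\xi(t)$ for all $t$, so $\xi$ is non-degenerate both as $t\to-\infty$ and as $t\to+\infty$. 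Thus $\xi$ is a non-autonomous equilibrium contained in $\mathcal C$.

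The step I expect to be the main obstacle is the uniform smoothing estimate: one must be careful that the $L^\infty$ a priori bound supplied by Theorem \ref{theo:comparison} indeed upgrades, after a fixed positive time, to an $H^1_0(0,\pi)$-bound uniform in $t$ and independent of the particular initial datum in $X_1^+$, and that the non-local factor $a(\|w_x\|^2)\in[1,2]$ does not interfere with the parabolic estimates — it does not, being bounded away from zero. Everything else is either a routine verification or a direct appeal to the abstract theory of pullback attractors.
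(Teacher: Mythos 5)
Your proposal is correct, and it reaches the conclusion by a genuinely more self-contained route than the paper. The paper's own proof is three lines: invariance of $X_1^+$ is quoted from the discussion preceding the theorem, the existence of a pullback attractor for the restricted process is deduced from the fact (cited from \cite{ACR-B}) that the full process $S_\beta(\cdot,\cdot)$ already has a pullback attractor in $H^1_0(0,\pi)$, and the final assertion that any global solution in the restricted attractor is a non-autonomous equilibrium is simply stated. You instead build a compact pullback absorbing family $\{K(t)\}$ from scratch, using the uniform $L^\infty$ bound on $X_1^+$ to control $\|f(t,w)\|_{L^2}$ and the variation-of-constants formula with $\|A^{3/4}e^{-\tau A}\|\le C\tau^{-3/4}$ to land in $D(A^{3/4})\hookrightarrow\hookrightarrow H^1_0(0,\pi)$ after time one. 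This buys two things: it avoids any external citation for dissipativity, and it quietly repairs a point the paper glosses over, namely that $X_1^+$ is order-bounded but \emph{not} norm-bounded in $H^1_0(0,\pi)$, so the bare statement that the attractor of the full process ``restricts'' needs a word of justification (pullback attraction is only guaranteed for $H^1_0$-bounded sets), whereas your $K(t)$ absorbs all of $X_1^+$ because the smoothing estimate only consumes the $L^2$ (indeed $L^\infty$) bound. You also supply the verification, omitted in the paper, that a bounded global solution lying in $X_1^+$ actually meets Definition \ref{def:na_equil}: constancy of the zero set $\{0,\pi\}$ and two-sided non-degeneracy, witnessed by $\phi^+_{1,b_2}$ with $(\phi^+_{1,b_2})'(0)>0>(\phi^+_{1,b_2})'(\pi)$. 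Both approaches are valid; yours is longer but closes the small gaps in the published argument.
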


\begin{proof} The invariance follows from the reasoning that preceeded the theorem. The fact that $S_\beta(\cdot, \cdot)$ has a pullback attractor in $H^1_0(0,\pi)$ ensures that it also has a pullback attractor when restricted to $X^+_1$.
Now, any global solution in the pullback attractor of $S_\beta(\cdot, \cdot)$ restricted to $X^+_1$ is a non-autonomous equilibria.
\end{proof}

\begin{center}
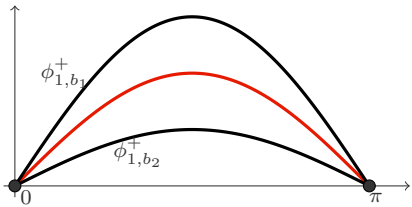

\begin{tikzpicture}[scale=1.5] \label{fig:positive_sol}
\definecolor{uuuuuu}{rgb}{0.2,0.2,0.2}
\definecolor{ffqqqq}{rgb}{0.9,0.1,0.0}
\draw[->,color=uuuuuu] (-0.1,0) -- (3.5,0.0);
\draw[->,color=uuuuuu] (0.0,-0.1) -- (0.0,1.6);
\draw[line width=1.2pt,color=ffqqqq,smooth,samples=100,domain=0.0:3.141592653589793] plot(\x,{sin(((\x))*180/pi)});
\draw[line width=1.2pt,smooth,samples=100,domain=0.0:3.141592653589793] plot(\x,{sin(((\x))*180/pi)/2.0});
\draw[line width=1.2pt,smooth,samples=100,domain=0.0:3.141592653589793] plot(\x,{1.5*sin(((\x))*180/pi)});
\begin{scriptsize}
\draw[color=uuuuuu] (1.0930274712527526,0.3) node {$\phi^+_{1,b_2}$};
\draw[color=uuuuuu] (0.45,1.0) node {$\phi^+_{1,b_1}$};
\draw [fill=uuuuuu] (0.0,0.0) circle (1.5pt);
\draw [fill=uuuuuu] (3.1415926501172664,0.0) circle (1.5pt);
\draw[color=uuuuuu] (3.2,-0.10334330006135964) node {$\pi$};
\draw[color=uuuuuu] (0.1,-0.10334330006135964) node {$0$};
\end{scriptsize}
\end{tikzpicture}
\captionof{figure}{ Region bounded by the positive equilibria $\phi^+_{1,b_1}$ and $\phi^+_{1,b_2}.$}
\end{center}

\subsection{Construction of other non-autonomous equilibria}

Suppose that $\lambda >2N^2,$ for $N \in \mathbb{N}^*.$  By Theorem \ref{theo:bif-C-I}, if $1\leq j\leq N,$ then there exists an equilibrium $\phi^+_{j,b_k}$ for $T_k(\cdot),$ $k=1,2,$ with $j+1$ zeros in $[0,\pi].$ For $1< j\leq N,$ consider the set $X^+_j=Y^+_j\cap Z_j,$ where
\[Y^+_j=\left\{u \in H^1_0(0,\pi): \min{\left(\phi^+_{j,b_1}(x),\phi^+_{j,b_2}(x)\right)}\leqslant u(x) \leqslant \max\left(\phi^+_{j,b_1}(x),\phi^+_{j,b_2}(x)\right)\right\}\]
and
\[Z_j = \left\{\begin{array}{cl} u \in H^1_0(0,\pi) : \ &u(x)=u\left(\tfrac{(2k-1)}{j}\pi-x\right) \mbox{ in } \left(0,\tfrac{2k-1}{j}\pi\right) \mbox{ and }\\ &u(x)=-u\left(\tfrac{2k}{j}\pi -x\right) \mbox{ in } (0,\tfrac{2k}{j}), \mbox{ where } k \in \N, 1\leqslant k\leqslant \tfrac{i}{2} \end{array} \right\}.\]

Let us prove that these sets are positively invariant.
We start with $j=2.$

\begin{center}
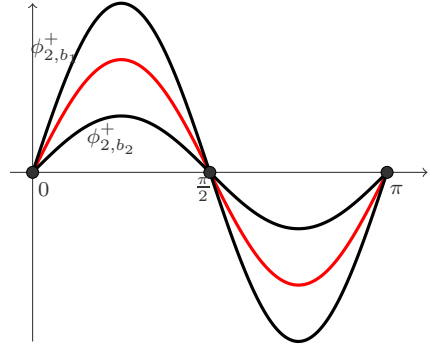

\begin{tikzpicture}[scale=1.5]
\definecolor{uuuuuu}{rgb}{0.2,0.2,0.2}
\definecolor{ffqqqq}{rgb}{1.0,0.0,0.0}
\draw[->,color=uuuuuu] (-0.2,0.0) -- (3.5,0.0);
\draw[->,color=uuuuuu] (0.0,-1.5) -- (0.0,1.5);
\draw[line width=1.2000000000000002pt,color=ffqqqq,smooth,samples=100,domain=0.0:3.141592653589793] plot(\x,{sin((2.0*(\x))*180/pi)});
\draw[line width=1.2000000000000002pt,smooth,samples=100,domain=0.0:3.141592653589793] plot(\x,{sin((2.0*(\x))*180/pi)/2.0});
\draw[line width=1.2000000000000002pt,smooth,samples=100,domain=0.0:3.141592653589793] plot(\x,{1.5*sin((2.0*(\x))*180/pi)});
\begin{scriptsize}
\draw[color=uuuuuu] (0.7,0.3) node {$\phi^+_{2,b_2}$};
\draw [fill=uuuuuu] (0.0,0.0) circle (1.5pt);
\draw [fill=uuuuuu] (3.1415926501172664,0.0) circle (1.5pt);
\draw[color=uuuuuu] (3.2267389268560303,-0.15) node {$\pi$};
\draw[color=uuuuuu] (0.2,1.1) node {$\phi^+_{2,b_1}$};
\draw [fill=uuuuuu] (1.570796326757987,0.0) circle (1.5pt);
\draw[color=uuuuuu] (1.5107381794925476,-0.15) node {$\frac{\pi}{2}$};
\draw[color=uuuuuu] (0.1,-0.15) node {$0$};
\end{scriptsize}
\end{tikzpicture}
\captionof{figure}{The set $X^+_2,$ the functions that lies between the solutions $\phi^+_{2,b_1}$ and $\phi^+_{2,b_2}.$ }
\end{center}

Consider $u_0 \in X^+_2$ then we know that $u_0 \in Z^+_2$ which means that $u_0(x)=-u_0(\pi-x).$ And by the uniqueness of solution, we have that $u(t,s, u_0)(x)=-u(t,s, u_0)(\pi-x),$ for all $x \in [0,\pi].$  With this we prove that $u \in Z^+_2.$ Moreover, we conclude that if $t\geqslant s$ then $u(t,s, u_0)(\tfrac{\pi}{2})=0$. Then we can use comparison to show that $u \in X^+_2:$
\[\begin{cases}
0 \leqslant T_2(t-s)u_0 \leqslant S(t,s)u_0\leqslant T_1(t,s)u_0 & \mbox{ in } [0,\tfrac{\pi}{2}]\\
T_1(t-s)u_0 \leqslant S(t,s)u_0\leqslant T_2(t,s)u_0 \leqslant 0 & \mbox{ in } [\tfrac{\pi}{2},\pi]
\end{cases}\]

Since $\phi^+_{2,b_i}$ is an equilibrium for $T_i(\cdot)$ and $0\leqslant \phi^+_{2,b_2}\leqslant u_0 \leqslant \phi^+_{2,b_1}$ in $[0,\tfrac{\pi}{2}]$ and \\ $\phi^+_{2,b_1}\leqslant u_0 \leqslant \phi^+_{2,b_2}\leqslant 0$ in $[\tfrac{\pi}{2},\pi],$ we can write
\[\begin{cases}
0 \leqslant \phi^+_{2,b_2} \leqslant S(t,s)u_0\leqslant \phi^+_{2,b_1} & \mbox{ in } [0,\tfrac{\pi}{2}]\\
\phi^+_{2,b_1} \leqslant S(t,s)u_0\leqslant \phi^+_{2,b_2} \leqslant 0 & \mbox{ in } [\tfrac{\pi}{2},\pi].
\end{cases}\]

Therefore, $X^+_2$ is positively invariant.

Before proving the case $X^+_3,$ we will prove the case $X^+_4.$ For that, just observe that if $\phi \in X^+_4$ then we have $\phi(x)=-\phi(\pi-x),$ for all $x \in (0,\pi)$ and, in particular, $u(t,s,\tfrac{\pi}{2})=0.$ Then, we can analyse the following problem
%
%
%
%
%
\begin{equation}\label{eq:X_4}
\begin{cases} u_t= u_{xx}+\frac{\lambda u -\beta(\phi^{-1}(t))u^3}{a(\Vert u_x\Vert^2)}\\
u(0,t)=u(\tfrac{\pi}{2},t)=0 & \mbox{ for all } t\geq s \\
u(x,\phi(s))={\phi}(x) & x \in [0,\tfrac{\pi}{2}]
\end{cases}
\end{equation}
Moreover, using the uniqueness of solution for \eqref{eq:X_4}, we conclude that
\[u(t,s,\phi)(x)=-u(t,s,\phi)(\tfrac{\pi}{2}-x), \ \mbox{ for } x \in [0,\tfrac{\pi}{2}]\]
and
\[u(t,s,\phi)(x)=-u(t,s,\phi)(\pi-x)\ \mbox{ for } x \in [0,\tfrac{\pi}{2}] (\mbox{ hence for } x\in [0,\pi]).\]

In particular, $u(t,s,\phi,\frac{\pi}{4})=0$ for all $t\geqslant s.$ and $u(t,s,\phi,\frac{3\pi}{4})=-u(t,s,\phi,\frac{\pi}{4})=0.$
With this, we can prove that $u$ lies in $Z^+_4.$ Now, we can use comparison to prove that $X^+_4$ is positively invariant.

To prove the invariance of $X^+_3,$ we define the following set
\[ W^+_4=\left\{\phi \in H^1_0\left(0,\tfrac{4\pi}{3}\right): \ \phi(x)=-\phi\left(\tfrac{4\pi}{3}-x\right), \mbox{ for } x \in \left[0,\tfrac{4\pi}{3}\right] \mbox{ and } \phi\mid_{[0,\pi]}\in X^+_3\right\}.\]

We have that $u(t,s,\phi, \tfrac{2\pi}{3})=0$ and can use the same idea as in $X^+_4$ to prove that $u(t,s,\phi, \tfrac{\pi}{3})=0.$ The comparison in $[0,\pi]$ follows similarly to the previous cases.

Therefore $X^+_3$ is invariant, since it is a restriction of $W^+_4.$

For the other cases, $\phi \in X^+_j,$ just observe that the invariance of $Z^+_j$ we can obtain using the reasoning applied in the previous cases and then we find that $u(t,s,\phi)(\frac{k\pi}{j})=0$ for $k=0,\dots, j.$
\[
    \begin{array}{rcccll}
     0\leq \ & T_2(t-s)\phi &\leq S(t,s)\phi &\leq T_1(t-s)\phi&        & \mbox{ in } [0,\tfrac{\pi}{j}] \\
	         & T_1(t-s)\phi &\leq S(t,s)\phi &\leq T_2(t-s)\phi& \leq 0 & \mbox{ in } [\tfrac{\pi}{j},\tfrac{2\pi}{j}] \\
	         & \vdots       & \vdots         & \vdots          &        &  \\
	         & T_1(t-s)\phi &\leq S(t,s)\phi &\leq T_2(t-s)\phi& \leq 0 & \mbox{ in } [\tfrac{(j-1)\pi}{j},\pi] \\
     0 \leq  & T_2(t-s)\phi &\leq S(t,s)\phi &\leq T_1(t-s)\phi&        & \mbox{ in } [\tfrac{(j-1)\pi}{j},\pi] \mbox{ and $j$ is odd}\\
	         & \Big( T_1(t-s)\phi &\leq S(t,s)\phi &\leq T_2(t-s)\phi& \leq 0 & \mbox{ in } [\tfrac{(j-1)\pi}{j},\pi] \mbox{ and $j$ is even}\Big)
    \end{array}
\]

Since $\phi \in Y^+_j,$ we can conclude that, for all $t\geq s,$
\[
\begin{array}{rcclll}
0\leq & \phi^+_{j,b_2}(x) &\leq S(t,s)\phi\leq & \phi^+_{j,b_1}(x) &       & \mbox{ in } [0,\tfrac{\pi}{j}] \\
	  & \phi^+_{j,b_1}(x) &\leq S(t,s)\phi\leq & \phi^+_{j,b_2}(x) &\leq 0 & \mbox{ in } [\tfrac{\pi}{j},\tfrac{2\pi}{j}] \\
      &\vdots             & \vdots             & \vdots            &       & \\
	  & \phi^+_{j,b_1}(x) &\leq S(t,s)\phi\leq & \phi^+_{j,b_2}(x) &\leq 0 & \mbox{ in } [\tfrac{(j-1)\pi}{j},\pi] \mbox{ and $j$ is even} \\
\Big(0\leq &\phi^+_{j,b_2}(x)  &\leq S(t,s)\phi\leq & \phi^+_{j,b_1}(x) &       & \mbox{ in } [\tfrac{(j-1)\pi}{j},\pi] \mbox{ and $j$ is odd}\Big)
\end{array}
\]

With this, we conclude that $S(t,s)\phi \in X^+_j$ for all $t\geqslant s.$ Therefore, we prove that $X^+_j$ is positively invariant and then $S(\cdot, \cdot)$ restricted to $X^+_j$ admits a pullback attractor. We can summarize the results in the following

\begin{theorem}\label{theo:exist-pa}
 Suppose that, for some $N \in \mathbb{N},$ we have $2N^2< \lambda\leq 2(N+1)^2.$ Then, for $j=1,\dots, N,$ the process $S_\beta(\cdot, \cdot)$ restricted to $X^+_j$ admits a pullback attractor.

In particular, for each $j=1,\dots, N,$ there exists a non-autonomous equilibrium $\xi^+_j$ that has $j+1$ zeros in $[0,\pi].$
\end{theorem}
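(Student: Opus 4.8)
The statement packages the constructions carried out in Subsections 3.1 and 3.2, so the plan is to organize those steps and fill in the verifications only sketched there. First I would check that the range $2N^2<\lambda\leq 2(N+1)^2$ is exactly what the auxiliary equilibria require: problem \eqref{eq:v/2} is a Chafee--Infante equation with parameter $\lambda/2$ and cubic coefficient $b_2$, so Theorem \ref{theo:bif-C-I} produces $\phi^+_{j,b_2}$ with zeros $0,\tfrac{\pi}{j},\dots,\pi$ as soon as $\lambda/2>j^2$, while \eqref{eq:b_1} has parameter $\lambda$ and coefficient $b_1/2$ and only needs $\lambda>j^2$. The binding constraint is $\lambda>2j^2$, which holds for every $j\in\{1,\dots,N\}$ precisely because $\lambda>2N^2$. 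I would also record that $X^+_j=Y^+_j\cap Z_j$ is nonempty (it contains $\phi^+_{j,b_1}$ and $\phi^+_{j,b_2}$, each of which satisfies the reflection and odd-reflection relations defining $Z_j$), closed and bounded in $H^1_0(0,\pi)$: the pointwise inequalities in $Y^+_j$ pass to $H^1_0$-limits since $H^1_0(0,\pi)\hookrightarrow C[0,\pi]$, the symmetry conditions in $Z_j$ are closed, and boundedness is built into $Y^+_j$.

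The core of the proof is the positive invariance $S_\beta(t,s)X^+_j\subseteq X^+_j$. Fix $\phi\in X^+_j$ and write $u=u(t,s,\phi)$ for the solution of \eqref{eq:nl_changed}. Equation \eqref{eq:nl_changed} is equivariant under each reflection $x\mapsto\tfrac{(2k-1)\pi}{j}-x$ and each odd reflection $x\mapsto\tfrac{2k\pi}{j}-x$, because $u\mapsto\lambda u-\beta(\phi^{-1}(t))u^3$ is odd and $\|u_x\|^2$ — hence the time change $\phi$ itself — is invariant under these reflections; by uniqueness of solutions, $u$ inherits every symmetry of $\phi$, so $u(t,s,\phi)\in Z_j$ and in particular $u(t,s,\phi)$ vanishes at $\tfrac{k\pi}{j}$ for $k=0,1,\dots,j$ and all $t\geq s$. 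These fixed interior zeros decouple $u$ into independent Dirichlet problems on the nodal intervals $I_k=[\tfrac{(k-1)\pi}{j},\tfrac{k\pi}{j}]$, on each of which $u$ has constant sign (alternating with $k$). On $I_k$, $T_1$- and $T_2$-solutions likewise restrict to their own local problems, the restriction of $\phi^+_{j,b_i}$ to $I_k$ is exactly the single-bump equilibrium of the $T_i$-problem on $I_k$, and the pointwise chain \eqref{eq:comparison} together with Theorem \ref{theo:comparison} (applied on $I_k$, the bookkeeping being carried by the odd-extension sets such as $W^+_4$ used above for $X^+_3$) gives, on the first nodal interval,
\[ 0\leq\phi^+_{j,b_2}\leq T_2(t-s)\phi\leq S_\beta(t,s)\phi\leq T_1(t-s)\phi\leq\phi^+_{j,b_1}, \]
and the orders reversed with signs changed on the remaining $I_k$. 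Hence $S_\beta(t,s)\phi\in Y^+_j$, and together with $S_\beta(t,s)\phi\in Z_j$ this yields $S_\beta(t,s)\phi\in X^+_j$.

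With invariance in hand the pullback attractor of the restriction comes for free: $S_\beta(\cdot,\cdot)$ admits a pullback attractor on $H^1_0(0,\pi)$ (see \cite{ACR-B}), hence is pullback asymptotically compact; since $X^+_j$ is closed, bounded and positively invariant, the restricted process inherits pullback asymptotic compactness on the bounded pullback absorbing set $X^+_j$, so the standard existence theorem in \cite{Carvalho et al.} produces a pullback attractor $\{A_j(t):t\in\R\}\subseteq X^+_j$, exactly as in the case $j=1$ treated above. By the characterization \eqref{eq:pa-charac} there is at least one bounded global solution $\xi^+_j$ with $\xi^+_j(t)\in X^+_j$ for all $t\in\R$. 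Membership in $Z_j$ pins the zero set of $\xi^+_j(t)$ to $\{\tfrac{k\pi}{j}:k=0,\dots,j\}$ for every $t$, so $\xi^+_j$ has $j+1$ zeros in $[0,\pi]$; and since on $[0,\tfrac{\pi}{j}]$ one has $\xi^+_j(t)\geq\phi^+_{j,b_2}$, whose one-sided derivatives at $0$ and $\tfrac{\pi}{j}$ do not vanish, $\xi^+_j$ is non-degenerate as $t\to\pm\infty$ (for $j\geq 2$ this is read on each nodal domain and extended by the symmetries in $Z_j$). Thus $\xi^+_j$ is a non-autonomous equilibrium in the (extended) sense of Definition \ref{def:na_equil}.

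The step I expect to be the genuine obstacle is precisely this localization of the comparison principle to the nodal intervals $I_k$: Theorem \ref{theo:comparison} is stated globally on $(0,\pi)$, the non-local factor $a(\|u_x\|^2)$ couples the $I_k$, and on the full interval $\phi$ is in general not ordered with respect to $\phi^+_{j,b_1}$ or $\phi^+_{j,b_2}$, the order flipping from one nodal interval to the next. One must therefore really exploit that the preserved symmetries split $\|u_x\|^2$ into $j$ equal pieces and use the odd-extension device ($W^+_4$ and its higher analogues) to obtain an honest comparison on each nodal domain; checking that the data of the extended problems and the time change $\phi$ are compatible with all the reflections involved is the delicate point.
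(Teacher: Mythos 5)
Your proposal is correct and follows essentially the same route as the paper: symmetry preservation via uniqueness to fix the zeros at $k\pi/j$, the comparison Theorem \ref{theo:comparison} applied nodal interval by nodal interval (with the odd-extension set $W^+_4$ for the odd cases), positive invariance of $X^+_j=Y^+_j\cap Z_j$, and then the restriction of the pullback attractor together with the characterization \eqref{eq:pa-charac} to produce the non-autonomous equilibria $\xi^+_j$. The delicate point you single out — that the non-local factor $a(\|u_x\|^2)$ couples the nodal intervals and must be handled through the preserved symmetries and odd extensions — is exactly the step the paper treats via the same device, so no change of strategy is needed.
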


\begin{remark}
Note that if $\lambda >2N^2,$  for $1 \leq j\leq N,$ there exists an equilibrium $\phi^-_{j,b_k}$ for $T_k(\cdot),$ $k=1,2,$ with $j+1$ zeros in $[0,\pi].$ Then, we can define the set $X^-_j=Y^-_j\cap Z_j,$ where
\[Y^-_j=\left\{u \in H^1_0(0,\pi): \min{\left(\phi^-_{j,b_1}(x),\phi^-_{j,b_2}(x)\right)}\leq u(x) \leq \max\left(\phi^-_{j,b_1}(x),\phi^-_{j,b_2}(x)\right)\right\}.\]

We can also prove that $S(t,s)X_j^-\subset X_j^-,$ for all $t\geq s.$
\end{remark}

Observe that all the construction were developed for solutions of \eqref{eq:nl_changed}. But we just need to remember that the change of variables only affects $t,$ hence we constructed a set of bounded non-autonomous equilibria also for \eqref{1.1}. We can summarize the result in the following

\begin{theorem}
Suppose that $\lambda > 2N^2,$ for $N \in \mathbb{N}^*.$ The problem \eqref{1.1} has at least $2N$ non-autonomous equilibria.
\end{theorem}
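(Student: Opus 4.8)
The plan is to collect the $2N$ non-autonomous equilibria that the constructions of this section already furnish for the transformed problem \eqref{eq:nl_changed}, to check that they are pairwise distinct, and then to transport them back to \eqref{1.1} through the time substitution.

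First I would note that $\lambda>2N^2$ forces $\lambda>2j^2$, hence also $\lambda/2>j^2$, for every $j\in\{1,\dots,N\}$; thus Theorem \ref{theo:bif-C-I} applies to both auxiliary semigroups $T_1(\cdot)$ and $T_2(\cdot)$ and supplies, for each such $j$ and $k\in\{1,2\}$, equilibria $\phi^{\pm}_{j,b_k}$ with $j+1$ zeros in $[0,\pi]$, so the regions $X^{\pm}_j$ are well defined. Applying Theorem \ref{theo:exist-pa} (with its index replaced, if necessary, by the largest $M$ with $2M^2<\lambda$, which satisfies $M\geq N$) together with the Remark following it, I obtain for $j=1,\dots,N$ a non-autonomous equilibrium $\xi^+_j\in X^+_j$ and a non-autonomous equilibrium $\xi^-_j\in X^-_j$ of $S_\beta(\cdot,\cdot)$, giving the list $\xi^+_1,\dots,\xi^+_N,\xi^-_1,\dots,\xi^-_N$.

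Next I would verify that these $2N$ solutions are pairwise distinct. For $j\neq j'$ this is immediate: membership of $\xi^{\pm}_j(t)$ in $Z_j$ forces its zero set to be exactly $\{0,\tfrac{\pi}{j},\tfrac{2\pi}{j},\dots,\pi\}$ for every $t\in\R$, so $\xi^{\pm}_j$ and $\xi^{\pm}_{j'}$ have a different number of zeros. For fixed $j$, on the interval $(0,\tfrac{\pi}{j})$ the solution $\xi^+_j(t)$ lies between the nonnegative functions $\phi^+_{j,b_2}$ and $\phi^+_{j,b_1}$, whereas $\xi^-_j(t)$ lies between the nonpositive $\phi^-_{j,b_1}$ and $\phi^-_{j,b_2}$; since none of these equilibria vanishes identically on $(0,\tfrac{\pi}{j})$, we get $\xi^+_j\neq\xi^-_j$. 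Hence \eqref{eq:nl_changed} admits at least $2N$ non-autonomous equilibria (the zero solution being excluded, since it is degenerate in the sense of Definition \ref{def:na_equil}).

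Finally I would pass back to \eqref{1.1}. The substitution $\phi(t)=\int_0^t a(\|u_x(\cdot,\sigma)\|^2)\,d\sigma$, $w(x,\phi(t))=u(x,t)$, affects only the time variable, and since $1\leq a\leq 2$ the map $\phi$ is a strictly increasing $C^1$ bijection of $\R$ onto $\R$ with $\phi(t)\to\pm\infty$ as $t\to\pm\infty$ --- and likewise for $\phi^{-1}$. Consequently, if $\xi$ is any of the $\xi^{\pm}_j$ (a global bounded solution of \eqref{eq:nl_changed} whose spatial zero set does not depend on the time argument and which is non-degenerate as that argument tends to $\pm\infty$), then $u(x,t):=\xi(x,\phi(t))$ is a global bounded solution of \eqref{1.1} with the same time-independent zero set and, because $\phi(t)\to\pm\infty$ as $t\to\pm\infty$, still non-degenerate as $t\to\pm\infty$; that is, $u$ is a non-autonomous equilibrium in the sense of Definition \ref{def:na_equil}. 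Distinctness is preserved because $\phi$ is a bijection, so \eqref{1.1} inherits at least $2N$ non-autonomous equilibria. I expect the only genuinely delicate point to be this last step: confirming that non-degeneracy and the constancy in time of the zero set really survive the solution-dependent time rescaling --- i.e.\ that $\phi$ is a bijection of all of $\R$ fixing $\pm\infty$ --- since everything else is bookkeeping built on Theorems \ref{theo:bif-C-I} and \ref{theo:exist-pa}.
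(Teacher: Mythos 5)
Your proposal is correct and follows essentially the same route as the paper: it collects the equilibria $\xi^+_j$ from Theorem \ref{theo:exist-pa} and their counterparts $\xi^-_j$ from the invariant sets $X^-_j$ of the Remark, and then transfers them to \eqref{1.1} via the observation that the change of variables $\phi$ only reparametrizes time (the paper's entire argument is precisely this remark). You simply spell out more than the paper does --- the pairwise distinctness of the $2N$ solutions and the fact that $1\leq a\leq 2$ makes $\phi$ an increasing bijection of $\R$ preserving non-degeneracy and the zero sets --- which is a welcome, not a divergent, elaboration.
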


\section{The autonomous problem}

Consider the autonomous problem
	\begin{equation}\label{eq:bproblem}
	 \begin{cases}
u_t=a(\|u_x\|^2)u_{xx}+\lambda u- bu^3, &\text{$t>0$, $x\in(0,\pi)$},\\
	u(0,t)=u(\pi,t)=0, &\text{$t\geqslant 0$},\\
	u(0)=u_0\in H^1_0(0,\pi).
\end{cases}	
	\end{equation}
for some constant $b>0.$

The map $V:H^1_0(0,\pi)\longrightarrow \R$ is a Lyapunov function for \eqref{eq:bproblem}
   \[
V(u)=\frac{1}{2}\int^{\|u_x\|_j^2}_0a(s)ds+\int^\frac{\pi}{j}_0\left(-\frac{\lambda}{2}(u(x))^2+\frac{b}{4}(u(x))^4\right)dx.
\]

Hence, the semigroup   $\{T_b(t): t\geq 0\}$ associated to \eqref{eq:bproblem} is gradient.

Observe also that we can use the analysis from the previous section, taking $b_1=b_2=b$ in \eqref{eq:b_1} and \eqref{eq:v/2} and we have the comparison \eqref{eq:comparison_la}.

Suppose that $\lambda >2.$ Then, if we take some $u_0 \in X^+_1,$ we have that
\[\phi^+_{1,b_2}\leq T_b(t)u_0\stackrel{t\rightarrow +\infty}{ \longrightarrow } \psi\]
where $\psi$ is an equilibrium of the equation in \eqref{eq:bproblem}. Observe that $\psi\in X_1^+.$

Therefore, with this, we proved that if $\lambda >2,$ we have an positive equilibrium for \eqref{eq:bproblem}. In the Appendix, section \ref{sec:posit-equil}, we show that if $a$ is a non-decreasing function then we can find only one positive and symmetric equilibrium when $\lambda >a(0).$

Observe that we could use the same arguments and the fact that $T_b(\cdot)$ is gradient to guarantee that if $\lambda>2N^2,$ for some $N\in \mathbb{N}^*,$ then we can construct $2N$, $\{\phi_j^\pm: 1\leq j\leq N\}$, non-zero equilibria for \eqref{eq:bproblem} with the properties $ii)$ and $iii)$ of Theorem \ref{theo:bif-C-I} and such that $\phi_j(x)=\phi_j^+(\pi/j-x)$, $x\in (0,\pi/j)$, and $\phi_j(x)=-\phi_{j}(\pi/j+x)$, $x\in (0,\pi-\pi/j))$, $1\leq j\leq N$.

Note that, the bifurcations are expected to happen at $a(0)N^2$. In fact, when $a$ is a non-decreasing function, the bifurcations of equilibria happen every time $\lambda$ passes $a(0)N^2, $ for $N \in \mathbb{N}^*$. The proof of that fact can be found in Section \ref{section:Equilibria}.

\subsection{An auxiliary elliptic problem}\label{sec:posit-equil}
We want to show another approach to the autonomous problem \eqref{eq:bproblem}, when we study the existence of a positive, nontrivial and symmetric equilibria by minimizing the energy of the elliptical problem
    \begin{equation}\label{auxiliaryproblem}
        \begin{cases}
        -a\left(\|u_x\|_j^2\right)u_{xx}=\lambda u- b u^3, & x\in(0,\frac{\pi}{j}),\\
	       u(0)=u(\frac{\pi}{j})=0 &\\
	   \end{cases}
    \end{equation}
where $j\in \N $ ($j=1,2,\cdots$), $\Dp\|u_x\|_j^2=\int^\frac{\pi}{j}_0|u_x(x)|^2dx$ (usual norm of the Banach space $H^1_0(0,\frac{\pi}{j})$), constants $\lambda>0$,  $b>0$ and $a:\R\rightarrow\R$ a continuous function such that
\begin{equation}\label{a}
0<\sigma\leqslant a(t), \qquad\forall t\in \R.
\end{equation}
holds.

We say that $u\in H^1_0(0,\frac{\pi}{j})$ is a weak solution of the problem \eqref{auxiliaryproblem} if
		\begin{equation}\label{solutionauxialiaryproblem}
        	\Dp a\left(\|u_x\|_j^2\right)\int^\frac{\pi}{j}_0 u_x(x)v_x(x)dx+\int^\frac{\pi}{j}_0\left(-\lambda u(x)+b(u(x))^3\right)v(x)dx=0,
		\end{equation}for all $v\in H^1_0(0,\frac{\pi}{j})$.\\
		
Before we start to analyse this problem, remember that the operator\\ $L:H^2(0,\frac{\pi}{j})\cap H^1_0(0,\frac{\pi}{j})\longrightarrow H^1_0(0,\frac{\pi}{j})$ given by $Lu=-\Delta u$ is self-adjoint with a discrete set as spectrum (only the point spectrum is non-empty ) given by $\{j^2N^2 : N \in \mathbb{N}\}.$ In particular, its first eigenvalue is the number $\lambda_1=j^2.$

Note that a solution of  \eqref{solutionauxialiaryproblem} can be founded as critical point of the energy functional  $E:H^1_0(0,\frac{\pi}{j})\rightarrow\R$ defined by
    \begin{equation*}
        E(u)=\frac{1}{2}\int^{\|u_x\|_j^2}_0a(s)ds+\int^\frac{\pi}{j}_0\left(-\frac{\lambda}{2}(u(x))^2+\frac{b}{4}(u(x))^4\right)dx
    \end{equation*}
for all  $u\in H^1_0(0,\frac{\pi}{j})$. Note that
	\begin{equation*}
    	\langle E'(u),v\rangle=a\left(\|u_x\|^2\right)\int^\frac{\pi}{j}_0 u_x(x)v_x(x)dx+\int^\frac{\pi}{j}_0\left(-\lambda u(x)+b(u(x))^3\right)v(x)dx.
	\end{equation*} for all $u,v\in H^1_0(0,\frac{\pi}{j})$.\\
		
		First, we can prove that $E$ is a coercive functional on $H^1_0(0,\frac{\pi}{j})$ as a consequence of the inequality $\Dp E(u)\geqslant\frac{\sigma}{2}\|u_x\|_j^2-\frac{\lambda^2\pi}{4bj}$. Also, $E$ is weakly lower semicontinuous on $H^1_0(0,\frac{\pi}{j})$. Thus,
		
	\begin{lemma}\label{lemaxistenceauxiliaryproblem}
		Assume that the function $t\rightarrow a(t)$ is only continuous and \eqref{a}
 is valid, for some constant $\sigma >0$.
 If $\lambda> a(0)j^2,$ for $j \in \mathbb{N}^*$ there exists a positive, nontrivial and symmetric weak solution  for the problem \eqref{auxiliaryproblem}.
		\end{lemma}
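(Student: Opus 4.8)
The plan is to produce the solution as a global minimizer of the energy functional $E$ via the direct method of the calculus of variations, and then to upgrade the minimizer to a positive and symmetric function. \emph{Step 1 (existence of a minimizer).} Since $E$ is coercive by the stated bound $E(u)\geq\frac{\sigma}{2}\|u_x\|_j^2-\frac{\lambda^2\pi}{4bj}$ (which also shows $E$ is bounded below) and $E$ is weakly lower semicontinuous on $H^1_0(0,\frac{\pi}{j})$, a minimizing sequence is bounded, hence has a weakly convergent subsequence, and its weak limit $u^*$ attains $\inf E$. From the expression for $\langle E'(u),v\rangle$ recorded above, $u^*$ is a critical point of $E$, i.e. a weak solution of \eqref{solutionauxialiaryproblem}. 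Because $E(|u|)=E(u)$ for every $u\in H^1_0(0,\frac{\pi}{j})$ — the term $\frac12\int_0^{\|u_x\|_j^2}a$ depends only on $\|u_x\|_j=\||u|_x\|_j$ and the remaining terms only on $u^2$ and $u^4$ — I would replace $u^*$ by $|u^*|$ and assume $u^*\geq 0$.

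\emph{Step 2 (nontriviality via $\lambda>a(0)j^2$).} Let $\varphi_1(x)=\sin(jx)\in H^1_0(0,\frac{\pi}{j})$, the first Dirichlet eigenfunction of $-\Delta$, so that $\|\varphi_{1,x}\|_j^2=j^2\|\varphi_1\|_{L^2}^2$ (this is $\lambda_1=j^2$). For $t>0$,
\[
E(t\varphi_1)=\frac12\int_0^{t^2 j^2\|\varphi_1\|_{L^2}^2}a(s)\,ds-\frac{\lambda t^2}{2}\|\varphi_1\|_{L^2}^2+\frac{b t^4}{4}\int_0^{\pi/j}\varphi_1^4\,dx,
\]
and since $a$ is continuous at $0$ one has $\frac12\int_0^{t^2 j^2\|\varphi_1\|_{L^2}^2}a(s)\,ds=\frac{a(0)}{2}t^2 j^2\|\varphi_1\|_{L^2}^2+o(t^2)$ as $t\to 0^+$. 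Hence
\[
E(t\varphi_1)=\frac{t^2}{2}\bigl(a(0)j^2-\lambda\bigr)\|\varphi_1\|_{L^2}^2+o(t^2)<0=E(0)
\]
for $t$ small, so $\inf E<0$ and therefore $u^*\not\equiv 0$.

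\emph{Step 3 (positivity and symmetry).} Once $u^*$ is fixed, $c:=a(\|u^*_x\|_j^2)>0$ is a constant and $u^*$ is a weak, hence (by one-dimensional elliptic regularity and bootstrapping, the right-hand side being continuous) classical $C^2$, solution of $-c\,u''=\lambda u-bu^3$ on $(0,\frac{\pi}{j})$ with $u(0)=u(\frac{\pi}{j})=0$, $u\geq 0$, $u\not\equiv 0$. If $u^*$ vanished at an interior point $x_0$, then $u^*(x_0)=u^*_x(x_0)=0$ and uniqueness for the Cauchy problem of this ODE would force $u^*\equiv 0$; hence $u^*>0$ on $(0,\frac{\pi}{j})$ and $u^*_x(0)>0>u^*_x(\frac{\pi}{j})$. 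Multiplying the equation by $u^*_x$ shows that $H(x):=\frac{c}{2}(u^*_x)^2+\frac{\lambda}{2}(u^*)^2-\frac{b}{4}(u^*)^4$ is constant; comparing $H(0)$ with $H(\frac{\pi}{j})$ gives $u^*_x(0)=-u^*_x(\frac{\pi}{j})$, so $v(x):=u^*(\frac{\pi}{j}-x)$ solves the same ODE with $v(0)=0$ and $v_x(0)=u^*_x(0)$, and by uniqueness $v=u^*$, i.e. $u^*(x)=u^*(\frac{\pi}{j}-x)$. (Alternatively, one could minimize $E$ directly over the closed subspace of functions symmetric about $\frac{\pi}{2j}$ and invoke the principle of symmetric criticality, using that $E$ is invariant under $u\mapsto u(\frac{\pi}{j}-\cdot)$ and that $\varphi_1$ lies in that subspace.)

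The variational existence in Step 1 is essentially routine — the nonlocal factor $a(\|u_x\|^2)$ causes no extra difficulty here, since it becomes a constant coefficient once frozen at a given function — so the main obstacle I anticipate is the careful bookkeeping in Step 3: ensuring that a global minimizer, which a priori could change sign, may indeed be taken nonnegative, then strictly positive by the strong maximum principle / ODE uniqueness, and finally symmetric.
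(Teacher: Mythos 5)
Your proposal is correct, but it takes a genuinely different route from the paper. The paper does \emph{constrained} minimization: it restricts $E$ to the weakly closed convex set $\mathcal{M}=\{v: v(x)=v(\tfrac{\pi}{j}-x),\ 0\leqslant v\leqslant \sqrt{\lambda/b}\}$, invokes Struwe's Theorem 1.2 to get a relative minimizer, and then must work to show that this constrained minimizer is an honest critical point of $E$; this is done by the truncation argument with $\varphi^\epsilon=\max\{0,u+\epsilon\varphi-\sqrt{\lambda/b}\}$, $\varphi_\epsilon=\max\{0,-(u+\epsilon\varphi)\}$ and the measure estimates on $\Omega^\epsilon,\Omega_\epsilon$. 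In exchange, positivity, the $L^\infty$ bound $0\leqslant u\leqslant\sqrt{\lambda/b}$ and the symmetry are built into $\mathcal{M}$ from the start. You instead minimize over all of $H^1_0(0,\tfrac{\pi}{j})$ (so criticality of the minimizer is immediate, given that $E$ is $C^1$ when $a$ is continuous), recover nonnegativity from $E(|u|)=E(u)$, and then obtain strict positivity, $C^2$ regularity and the symmetry $u^*(x)=u^*(\tfrac{\pi}{j}-x)$ a posteriori by freezing the nonlocal coefficient $c=a(\|u^*_x\|_j^2)$ and using the one-dimensional ODE structure (Cauchy uniqueness for the cubic ODE and the first integral $H$). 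The nontriviality step is essentially identical in both arguments: testing with the first eigenfunction and using continuity of $a$ at $0$ together with $\lambda>a(0)j^2$ to get $\inf E<0$ (the paper phrases the same estimate via the mean value point $c_\delta$ rather than your $o(t^2)$ expansion). Your route avoids the penalization bookkeeping and is more elementary, but it leans on the 1D/ODE mechanism for positivity and symmetry; the paper's constrained scheme carries the symmetry and the explicit a priori bound through the variational step itself, which is the template it reuses when constructing the sign-changing equilibria on subintervals.
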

	\begin{proof}
		Since we are interested in positive and symmetric weak solutions for the problem \eqref{auxiliaryproblem}, we restrict the domain of $E$ to
    \begin{equation*}
    \mathcal{M}=\left\{v\in H^1_0(0,\frac{\pi}{j});\text{$v(x)=v(\frac{\pi}{j}-x)$ and $0\leqslant v \leqslant\sqrt{\frac{\lambda}{b}}$  almost everywhere in $(0,\frac{\pi}{j})$}\right\}.
    \end{equation*}
Clearly $\mathcal{M}$ is weakly closed, that is $\mathcal{M}$ and $E$ satisfies all the conditions of the theorem 1.2 of \cite{ST}. Then we infer the existence of the relative minimizers $u\in \mathcal{M}$. To show that $u$ is a weak solution of \eqref{auxiliaryproblem}, we consider $\varphi\in C^\infty_c(0,\frac{\pi}{j})$ and $\epsilon>0$. Let $v_\epsilon=u+\epsilon\varphi-\varphi^\epsilon+\varphi_\epsilon\in \mathcal{M}$, where
		\begin{eqnarray*}
	       \varphi^\epsilon=\max\left\{0,u+\epsilon\varphi-\sqrt{\frac{\lambda}{b}}\right\}\geqslant0&\quad \text{and}\quad
		   \varphi_\epsilon=\max\{0,-(u+\epsilon\varphi)\}\geqslant0,
	\end{eqnarray*}
we note that $\varphi^\epsilon,\varphi_\epsilon\in H^1_0(0,\frac{\pi}{j})\cap L^\infty(0,\frac{\pi}{j})$.\\
	Now we have the following estimates
	\begin{eqnarray*}
	\langle E'(u),\varphi^\epsilon\rangle&=& a(\|u_x\|_j^2)\int^\frac{\pi}{j}_0u_x(x)\varphi^\epsilon_x(x)+\int^\frac{\pi}{j}_0\left(-\lambda u(x)+b(u(x))^3\right)\varphi^\epsilon(x)dx\\
	&=&a(\|u_x\|_j^2)\int_{\Omega^\epsilon}u_x(x)(u_x+\epsilon\varphi_x)(x)+\int_{\Omega^\epsilon} \left(-\lambda u(x)+b(u(x))^3\right)(u+\epsilon\varphi-\sqrt{\frac{\lambda}{b}})(x)dx\\
	&\geqslant& a(\|u_x\|_j^2)\int_{\Omega^\epsilon}\epsilon\varphi_x(x)+\int_{\Omega^\epsilon} \left(-\lambda u(x)+b(u(x))^3\right)(u+\epsilon\varphi-\sqrt{\frac{\lambda}{b}})(x)dx\\
	&\geqslant& a(\|u_x\|_j^2)\int_{\Omega^\epsilon}\epsilon\varphi_x(x)+\int_{\Omega^\epsilon} \left(-\lambda u(x)+b(u(x))^3\right)\epsilon\varphi(x)dx\\
	&\geqslant& -a(\|u_x\|_j^2)|\Omega^\epsilon|\epsilon\|\varphi_x\|_{L^\infty(0,\frac{\pi}{j})}-\frac{2\lambda}{3}\left(\frac{\lambda}{3b}\right)^{\frac{1}{2}}|\Omega^\epsilon|\epsilon\|\varphi\|_{L^\infty(0,\frac{\pi}{j})}\\
	&\geqslant& \left[-a(\|u_x\|_j^2)\|\varphi_x\|_{L^\infty(0,\frac{\pi}{j})}-\frac{2\lambda}{3}\left(\frac{\lambda}{3b}\right)^{\frac{1}{2}}\|\varphi\|_{L^\infty(0,\frac{\pi}{j})}\right]\epsilon|\Omega^\epsilon|\\
	\end{eqnarray*}
	where $\Dp\Omega^\epsilon:=\left\{x\in(0,\frac{\pi}{j});u(x)+\epsilon\varphi(x)\geqslant\sqrt{\frac{\lambda}{b}}>u(x)\right\}$, that satisfies $|\Omega^\epsilon|\rightarrow0$ when $\epsilon\rightarrow0$. Similarly	

\begin{equation*}
\begin{split}
\langle E'(u),\varphi_\epsilon\rangle &= a(\|u_x\|_j^2)\int^\frac{\pi}{j}_0u_x(x)(\varphi_\epsilon)_x(x)+\int^\frac{\pi}{j}_0\left(-\lambda u(x)+b(u(x))^3\right)\varphi_\epsilon(x)dx\\
&= -a(\|u_x\|_j^2)\int_{\Omega_\epsilon}\!u_x(x)(u_x+\epsilon\varphi_x)(x)\!-\!\int_{\Omega_\epsilon}\! \left(-\lambda u(x)+b(u(x))^3\right)\!(u+\epsilon\varphi)(x)dx\\
&\leqslant -a(\|u_x\|_j^2)\int_{\Omega_\epsilon}\epsilon\varphi_x(x)-\int_{\Omega_\epsilon} (-\lambda u(x)+b(u(x))^3)\epsilon\varphi(x)dx\\
&\leqslant a(\|u_x\|_j^2)|\Omega_\epsilon|\epsilon\|\varphi_x\|_{L^\infty(0,\frac{\pi}{j})}+\frac{2\lambda}{3}\left(\frac{\lambda}{3b}\right)^{\frac{1}{2}}|\Omega_\epsilon|\epsilon\|\varphi\|_{L^\infty(0,\frac{\pi}{j})}\\
&\leqslant \left[a(\|u_x\|_j^2)\|\varphi_x\|_{L^\infty(0,\frac{\pi}{j})}+\frac{2\lambda}{3}\left(\frac{\lambda}{3b}\right)^{\frac{1}{2}}\|\varphi\|_{L^\infty(0,\frac{\pi}{j})}\right]\epsilon|\Omega_\epsilon|
\end{split}
\end{equation*}
where, $\Dp\Omega_\epsilon:=\left\{x\in(0,\frac{\pi}{j});u(x)+\epsilon\varphi(x)\leqslant0<u(x)\right\}$ that satisfies $|\Omega^\epsilon|\rightarrow0$ when $\epsilon\rightarrow0$.

Since that, $E$ is differentiable in $u$ and $E(u)\leqslant E(v_\epsilon)$ we have
\begin{equation*}
\langle E'(u),\varphi\rangle\geqslant\frac{\langle E'(u),\varphi^\epsilon\rangle-\langle E'(u),\varphi_\epsilon\rangle}{\epsilon}\geqslant C_1|\Omega_\epsilon|-C_2|\Omega^\epsilon|
\end{equation*}
So for $\epsilon\rightarrow0$ we obtain $\langle E'(u),\varphi\rangle\geqslant0$ for each $\varphi\in C^\infty_c(0,\frac{\pi}{j})$. By reversing of the signal and $\varphi\in \overline{C^\infty_c(0,\frac{\pi}{j})}=H^1_0(0,\frac{\pi}{j})$ we get $ E'(u)=0$.
		
Finally, we will to show that $u$ is a non-trivial. For that, we will show that the minimum of energy is  negative what it guarantees that $u$ could not be zero. In fact, let $\phi$ be the eigenfunction associated to the first eigenvalue $j^2$ of the operator $u_{xx}$ on $H^1_0(0,\frac{\pi}{j})$, that is, $\phi$ is  solution to the following eigenvalue problem

\begin{equation*} 
\begin{split}
-&\phi_{xx}=j^2 \phi,  x\in(0,\frac{\pi}{j}),\\
&\phi(0)=\phi(\frac{\pi}{j})=0. \\
\end{split}
\end{equation*}
Since that $a(0)j^2<\lambda$, from continuity of the function $a$, we have
$a(t)\lambda_1<\lambda$ for each $\Dp t\in\left[0,\lambda_1\delta^2\int^\frac{\pi}{j}_0(\phi(x))^2dx\right]$ for some $\delta>0$ small enough. Note that $\delta\phi\in\mathcal{M}$, thus

\begin{equation*}
\begin{split}
E(\delta\phi)&=\frac{1}{2}\int^{\|\delta\phi_x\|_j^2}_0a(s)ds+\int^\frac{\pi}{j}_0\left(\frac{b}{4}(\delta\phi(x))^4-\frac{\lambda}{2}(\delta\phi(x))^2\right)dx\\
&=\frac{1}{2}\delta^2\left[[a(c_\delta)j^2-\lambda]\int^\frac{\pi}{j}_0(\phi(x))^2dx+\frac{b}{2}\delta^2\int^\frac{\pi}{j}_0(\phi(x))^4dx\right],\\
&\text{for some $c_\delta\in\left[0,j^2\delta^2\int^\frac{\pi}{j}_0(\phi(x))^2dx\right]$,}\\
&<0.
\end{split}
\end{equation*}

\end{proof}
	\begin{lemma}\label{lemmauniqueness}If  a function $t\rightarrow a(t)$ is only continuous and \emph{non-decreasing} and \eqref{a} holds then a positive and non-trivial solution of the problem \eqref{auxiliaryproblem} is unique.
    \end{lemma}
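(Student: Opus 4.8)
The plan is to prove uniqueness of the positive, nontrivial solution of \eqref{auxiliaryproblem} by a comparison/scaling argument that exploits monotonicity of $a$. First I would observe that any positive weak solution $u$ is, by elliptic regularity and the maximum principle, a classical solution, strictly positive in the open interval with $u_x(0)>0>u_x(\pi/j)$, and that once we fix the value $\mu:=\|u_x\|_j^2$, the function $u$ solves the \emph{linear-principal-part}, genuinely nonlinear ODE $-a(\mu)u_{xx}=\lambda u-bu^3$ with Dirichlet conditions on $(0,\pi/j)$. Dividing by $a(\mu)$, this is exactly a Chafee--Infante-type equation with diffusion $1$, parameter $\lambda/a(\mu)$ and cubic coefficient $b/a(\mu)$; by Theorem \ref{theo:bif-C-I} such an equation has, for $\lambda/a(\mu)>j^2$, exactly one positive equilibrium on $(0,\pi/j)$ (the $\phi^+_{1}$ branch rescaled to that interval), and that equilibrium is symmetric about $\pi/(2j)$. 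Hence the positive solution of \eqref{auxiliaryproblem} is completely determined by the scalar $\mu$: writing $U_\mu$ for this unique positive solution of the frozen problem, uniqueness for \eqref{auxiliaryproblem} reduces to showing that the self-consistency equation $\mu=\|(U_\mu)_x\|_j^2=:G(\mu)$ has a unique solution.

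Next I would analyze $G$. For the local Chafee--Infante equation $-w_{xx}=\Lambda w - Bw^3$ on $(0,\pi/j)$ with $\Lambda>j^2$, the time-map analysis of \cite{CH-IN} gives the positive equilibrium explicitly (or at least monotonically) in terms of its amplitude $\|w\|_\infty$, and one checks that $\|w_x\|_j^2$ is a strictly increasing function of $\Lambda$ for fixed $B$ and strictly decreasing in $B$ for fixed $\Lambda$; more precisely, by the scaling $w(x)=\sqrt{\Lambda/B}\,\omega(\sqrt{\Lambda}\,x)$ one sees $\|w_x\|_j^2 = (\Lambda/B)\cdot c(\Lambda)$ where $c(\Lambda)$ is increasing. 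Applying this with $\Lambda=\lambda/a(\mu)$ and $B=b/a(\mu)$, the ratio $\Lambda/B=\lambda/b$ is \emph{independent of $\mu$}, so $G(\mu)=(\lambda/b)\,c(\lambda/a(\mu))$, and since $c$ is increasing and, when $a$ is non-decreasing, $\mu\mapsto \lambda/a(\mu)$ is non-increasing, we get that $G$ is \emph{non-increasing} in $\mu$. A non-increasing continuous function can cross the identity line at most once, which yields uniqueness of the fixed point $\mu$, hence of the positive symmetric solution; combined with the first paragraph (positivity forces symmetry and the single amplitude), this gives uniqueness of \emph{every} positive nontrivial solution, not merely the symmetric ones.

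A cleaner alternative I would keep in reserve avoids the explicit time map: suppose $u_1,u_2$ are two positive solutions with $\mu_i=\|(u_i)_x\|_j^2$ and, without loss of generality, $\mu_1\le\mu_2$, so $a(\mu_1)\le a(\mu_2)$. Then $u_1$ solves $-u_{1,xx}=\tfrac{1}{a(\mu_1)}(\lambda u_1-bu_1^3)$ and $u_2$ solves $-u_{2,xx}=\tfrac{1}{a(\mu_2)}(\lambda u_2-bu_2^3)$. Testing the first equation with $u_2$ and the second with $u_1$, subtracting, and using the ordering of the $1/a(\mu_i)$ together with the strict concavity/monotonicity structure of $s\mapsto \lambda s-bs^3$ on the relevant range (equivalently, a Picone-type identity for the one-dimensional Laplacian applied to the positive functions $u_1,u_2$), one forces $u_1\equiv u_2$ up to the scaling that the frozen problem permits, and then $\mu_1=\mu_2$. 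I would present the argument in the structured ``frozen parameter + monotone fixed point'' form of paragraphs one and two, since it is the most transparent and uses only Theorem \ref{theo:bif-C-I} and elementary monotonicity of the time map.

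The main obstacle is the monotonicity claim for the local problem, namely that $\|w_x\|^2$ (equivalently the quantity $c(\Lambda)$ above) is strictly increasing in the parameter $\Lambda$ for the $\phi^+_1$ branch. This is exactly the kind of statement that the time-map machinery of \cite{CH-IN} delivers, but it must be extracted carefully: one writes $\|w_x\|^2$ as an integral over the amplitude using the first integral $E(w,w_x)=\tfrac{\Lambda}{2}\|w\|_\infty^2-\tfrac{B}{4}\|w\|_\infty^4$, differentiates the resulting amplitude-to-length relation, and checks the sign. I expect this computation, rather than the structural reduction, to be where the real work lies; everything else (regularity, positivity forcing symmetry via the uniqueness of the positive local equilibrium, continuity of $G$, the at-most-one-crossing conclusion) is routine.
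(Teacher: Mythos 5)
Your main argument---freeze $\mu=\|u_x\|_j^2$, observe that $u$ then solves the local Chafee--Infante problem with $\Lambda=\lambda/a(\mu)$, $B=b/a(\mu)$, and reduce uniqueness to the scalar fixed-point equation $\mu=G(\mu)$---is a genuinely different route from the paper's, and its skeleton is sound: the ratio $\Lambda/B=\lambda/b$ is indeed $\mu$-independent, and if $c(\Lambda)$ is increasing then $G$ is non-increasing precisely because $a$ is non-decreasing, so $\mu\mapsto\mu-G(\mu)$ is strictly increasing and has at most one zero. The problem is that the decisive analytic input---monotonicity in $\Lambda$ of the normalized Dirichlet energy $c(\Lambda)$ of the positive equilibrium---is announced, correctly identified as ``where the real work lies,'' and then not done. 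In your formulation that monotonicity \emph{is} the lemma, so as written the proposal has a genuine gap. (The claim is true: with $w(x)=\sqrt{\Lambda/B}\,\omega(\sqrt{\Lambda}\,x)$ one gets $c(\Lambda)=\sqrt{\Lambda}\int_0^{L}(\omega_L')^2\,dy$ with $L=\sqrt{\Lambda}\,\pi/j$, and the first integral yields $\int_0^{L}(\omega_L')^2\,dy=2\int_0^{p}\bigl(p^2-\tfrac{p^4}{2}-s^2+\tfrac{s^4}{2}\bigr)^{1/2}ds$ with amplitude $p=p(L)$ increasing in $L$, so both factors increase. But this computation must appear in the proof, not in an outlook.)

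The paper avoids the time map altogether. Its proof is a short Picone-type (D\'{\i}az--Sa\'{a}) argument: for two positive solutions $u,v$, regularity and the strong maximum principle make $u^2/v$ and $v^2/u$ admissible test functions; testing the $u$-equation against $u$ and $v^2/u$, the $v$-equation against $v$ and $u^2/v$, and adding, the nonlocal coefficients contribute exactly the term $\bigl(a(\|u_x\|_j^2)-a(\|v_x\|_j^2)\bigr)\bigl(\|u_x\|_j^2-\|v_x\|_j^2\bigr)\geq 0$ (the \emph{only} place monotonicity of $a$ enters), the gradient terms regroup into the manifestly nonnegative integrals $\int\bigl(u_x-\tfrac{u}{v}v_x\bigr)^2$ and $\int\bigl(v_x-\tfrac{v}{u}u_x\bigr)^2$, and the right-hand side collapses to $-b\int(u^2-v^2)^2$, forcing $u\equiv v$. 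This is essentially the ``cleaner alternative'' you keep in reserve, but your sketch of it is off: there is no residual ``scaling that the frozen problem permits,'' no case distinction on $\mu_1\le\mu_2$ is needed, and no appeal to Theorem \ref{theo:bif-C-I} or to symmetry of the solutions is required. I would either carry out the time-map monotonicity in full or, better, promote the Picone identity from reserve to main argument.
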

	
	\begin{proof} Assume, by contradiction, that $u$ and $v$ are two distinct nontrivial non-negative solutions of \eqref{auxiliaryproblem}. By regularity we have $u, v \in C^1(0,\frac{\pi}{j})$ and by the maximum principle we have $u, v > 0$ in $(0,\frac{\pi}{j})$ (see \cite{PR-WE}), so that $\Dp\frac{u^2}{v},\frac{v^2}{u}\in H^1_0(0,\frac{\pi}{j})$. Thus,
\begin{eqnarray*}
	0\!&\!\leqslant\!&\! \left(a(\|u_x\|_j^2)-a(\|v_x\|_j^2)\right)\left(\|u_x\|_j^2-\|v_x\|_j^2\right)+a(\|v_x\|_j^2)\int^{\frac{\pi}{j}}_0\left(u_x(x)-\frac{u(x)}{v(x)}v_x(x)\right)^2dx+\\
	& & a(\|u_x\|_j^2)\int^{\frac{\pi}{j}}_0\left(v_x(x)-\frac{v(x)}{u(x)}u_x(x)\right)^2dx\\
	&=& a(\|u_x\|_j^2)\int^{\frac{\pi}{j}}_0u_x^2(x)dx-a(\|u_x\|_j^2)\int^{\frac{\pi}{j}}_0u_x(x)\left(\frac{v^2}{u}\right)_x(x)dx+a(\|v_x\|_j^2)\int^{\frac{\pi}{j}}_0v_x^2(x)dx\\
	&&-a(\|v_x\|_j^2)\int^{\frac{\pi}{j}}_0v_x(x)\left(\frac{u^2}{v}\right)_x(x)dx\\
	&=& b\int^{\frac{\pi}{j}}_0\left(v^2(x)-v^2(x)\right)\left(u^2(x)-v^2(x)\right)dx.
			\end{eqnarray*} If $u\not\equiv v$, the last integral is negative and we obtain a contradiction.
	\end{proof}\\

\subsection{Equilibria for the autonomous problem}	\label{section:Equilibria}
		
In this section, we are also assuming that $a$ satisfies \eqref{a} and also that $a$ is non-decreasing. Depends on the position of parameter $\lambda >0$, we can inductively construct equilibria the change sign for the problem \eqref{eq:bproblem}.

 Notice that finding equilibria for the equation\eqref{eq:bproblem} is finding the solutions for
	\begin{equation}\label{eq:aut_pi}
	 \begin{cases}
	 	a(\Vert u_x\Vert^2 )u_{xx}+\lambda u-b u^3=0 \qquad \text{in $(0,\pi)$} \\
	 	u(0)=u({\pi})=0
	 \end{cases}.	
	\end{equation}

First, if $\lambda<a(0)$ then  $0$ is the only solution of the problem \eqref{eq:aut_pi}.

In the case, $a(0)< \lambda,$ 
by Lemma \ref{lemaxistenceauxiliaryproblem} and  Lemma \ref{lemmauniqueness}, we can still find a unique positive symmetric weak solution for the problem \eqref{eq:aut_pi}, that we will denote $\phi_{1,b}^+.$ Observe that $\phi_{1,b}^-=-\phi_{1,b}^+$ is also an equilibrium for \eqref{eq:aut_pi}.

In the case $a(0)2^2< \lambda,$ 
we have the equilibria $0$, $\phi_{1,b}^+$ and $\phi_{1,b}^-$ and we can also construct a pair of equilibria that change sign one time. For that, we are going to restrict ourselves to the following problem in $[0,\frac{\pi}{2}]:$
	\begin{equation}\label{eq:aut_pi/2}
	 \begin{cases}
	 	a(\Vert u_x\Vert^2 )u_{xx}+\lambda u-\frac{b}{2}u^3=0  \\
	 	u(0)=u(\frac{\pi}{2})=0
	 \end{cases}	
	\end{equation}

Since, in this case, we have that $\lambda > a(0)2^2,$ the problem \eqref{eq:aut_pi/2} has a positive solution $\phi_{1,\frac{\pi}{2}}$ with $\phi_{1,\frac{\pi}{2}}(0)=\phi_{1,\frac{\pi}{2}}(\frac{\pi}{2})=0$ and that satisfies $u(x)=u(\frac{\pi}{2}-x)$ for all $x \in (0,\frac{\pi}{2}).$

We define $\phi^+_{2,b}(x)=\Dp\begin{cases}\frac{1}{\sqrt{2}}\phi_{1,\frac{\pi}{2}}(x), \mbox{ if } x\in [0,\frac{\pi}{2}] \\
-\frac{1}{\sqrt{2}}\phi_{1,\frac{\pi}{2}}(\pi-x), \mbox{ if } x\in [0,\frac{\pi}{2}] \end{cases}$

Notice that $\phi^+_{2,b}$ is a solution of \eqref{eq:aut_pi}. Also, $\phi^+_{2,b}(\pi-x)=-\phi^+_{2,b}(x),$ for all $x \in (0,\pi):$ if  $x \in [0,\frac{\pi}{2}],$ we have that
\[\phi^+_{2,b}(\pi-x)=-\tfrac{1}{\sqrt{2}}\phi_{1,\frac{\pi}{2}}(\pi-(\pi-x))=-\tfrac{1}{\sqrt{2}}\phi_{1,\frac{\pi}{2}}(x))=\phi^+_{2,b}(x)\] and if $x \in [\frac{\pi}{2}, \pi],$ then
\[\phi^+_{2,b}(\pi-x)=\tfrac{1}{\sqrt{2}}\phi_{1,\frac{\pi}{2}}(\pi-x)=-(-\tfrac{1}{\sqrt{2}}\phi_{1,\frac{\pi}{2}}(\pi-x))=\phi^+_{2,b}(x).\]

\[Y^+_2 := \left\{u \in H^1_0(0,\pi) : u(x)=-u(\pi-x) \mbox{ in } [0,\pi] \mbox{ and } u(x)=u(\pi/2 -x) \mbox{ in } [0,{\pi}/{2}]\right\}.\]	
	
An inductive argument can be applied to it and we can show that $a(0)j^2$ is a bifurcation point of the parameter $\lambda>0.$ We summarize the result in the following
	
    \begin{theorem}
    If $a(0)N^2< \lambda\leqslant a(0)(N+1)^2,$ then there are $2N+1$ equilibria of the equation \eqref{eq:bproblem}; $\{0\}\cup\{\phi_{j,b}^\pm: j=1,\dots, N\}$,
where $\phi^+_{j,b}$ and $\phi^-_{j,b}$  have $j+1$ zeros in $[0,\pi]$ and $\phi^-_{j,b}=-\phi^+_{j,b}$.
    \end{theorem}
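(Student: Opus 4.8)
The plan is to prove the two inclusions separately: first that the $2N+1$ listed functions are equilibria, by induction on the nodal number exactly as in the cases $j=1,2$ treated above; and then that there are no others, by freezing the non-local coefficient and reducing to the classical Chafee--Infante phase-plane and time-map analysis (cf.\ \cite{CH-IN}), which is legitimate once one notices that any equilibrium of \eqref{eq:bproblem} satisfies a genuine constant-coefficient ODE.

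\emph{Existence.} Fix $1\le j\le N$. Since $a$ is non-decreasing, $\lambda>a(0)N^2\ge a(0)j^2$, and the first Dirichlet eigenvalue of $-\partial_{xx}$ on $(0,\pi/j)$ is $j^2$, Lemmas \ref{lemaxistenceauxiliaryproblem} and \ref{lemmauniqueness} produce a unique positive symmetric weak solution $\phi_{1,\pi/j}$ of \eqref{auxiliaryproblem} on $(0,\pi/j)$ with the cubic coefficient $b$ replaced by $b/j$. Set $\phi^+_{j,b}:=j^{-1/2}\phi_{1,\pi/j}$ on $[0,\pi/j]$, extend it by successive odd reflections across $\pi/j,2\pi/j,\dots,(j-1)\pi/j$, and put $\phi^-_{j,b}:=-\phi^+_{j,b}$. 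A short computation shows $\phi^\pm_{j,b}$ solves \eqref{eq:aut_pi}: the factor $j^{-1/2}$ absorbs the passage from $b/j$ back to $b$, and the only non-local point is that the $H^1_0(0,\pi)$-norm of the reflected function equals the $H^1_0(0,\pi/j)$-norm of $\phi_{1,\pi/j}$, so that $a(\|(\phi^+_{j,b})_x\|^2)=a(\|(\phi_{1,\pi/j})_x\|_j^2)$; the reflection structure gives exactly the $j+1$ zeros $0,\pi/j,\dots,\pi$ and the required symmetry. This furnishes $2N+1$ pairwise distinct equilibria $\{0\}\cup\{\phi^\pm_{j,b}:1\le j\le N\}$.

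\emph{Completeness.} Let $u$ be a non-trivial equilibrium and set $c:=\|u_x\|^2>0$; then $u$ solves the constant-coefficient ODE $a(c)u_{xx}+\lambda u-bu^3=0$ with $u(0)=u(\pi)=0$. By elliptic regularity $u\in C^2$, and by ODE uniqueness every zero of $u$ is simple, hence isolated, so $u$ has finitely many zeros $0=x_0<\dots<x_k=\pi$ with $k\ge1$. On each nodal interval the energy $\tfrac12 a(c)u_x^2+F(u)$ is constant, where $F(u)=\tfrac{\lambda}{2}u^2-\tfrac{b}{4}u^4$; evaluating it at an endpoint and at the interior maximum, and using that $u_x$ is single-valued at each interior $x_i$, gives $F(\rho_i)=\tfrac12 a(c)u_x(x_i)^2=F(\rho_{i+1})$ for the successive amplitudes $\rho_i$, whence $\rho_i\equiv\rho$ because $0<\rho_i^2<\lambda/b$ and $F$ is strictly increasing there. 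After the substitution $x=\sqrt{a(c)}\,\xi$ the equation becomes $u_{\xi\xi}+\lambda u-bu^3=0$, and each nodal interval has length equal to the half-period of its orbit, which depends only on $\rho$; thus all nodal intervals have the same length $\pi/k$, and, since that half-period exceeds its linearized value $\pi\sqrt{a(c)/\lambda}$, one gets $\lambda>k^2a(c)\ge k^2a(0)$. As $\lambda\le a(0)(N+1)^2$ this forces $k\le N$ (and if $\lambda<a(0)$ only $u\equiv0$ remains). Finally, on $[0,\pi/k]$ the bump $v:=|u|$ solves the constant-coefficient problem $a(c)v_{xx}+\lambda v-bv^3=0$ with Dirichlet data; positive solutions of this problem being unique (Lemma \ref{lemmauniqueness} with $a$ the constant $a(c)$), the nodal pieces of $|u|$ are all translates of $v$, so $\|u_x\|^2=k\|v_x\|_k^2$ and hence $\psi:=\sqrt k\,v$ solves $a(\|\psi_x\|_k^2)\psi_{xx}+\lambda\psi-(b/k)\psi^3=0$, i.e.\ $\psi$ is a positive solution of \eqref{auxiliaryproblem} on $(0,\pi/k)$ with cubic coefficient $b/k$; by Lemma \ref{lemmauniqueness}, $\psi=\phi_{1,\pi/k}$. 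Since the sign of $u$ alternates across each $x_i$ (otherwise an interior zero is also a critical point, forcing $u\equiv0$), we conclude $u=\pm\phi^+_{k,b}$, so the equilibria are precisely $\{0\}\cup\{\phi^\pm_{j,b}:1\le j\le N\}$.

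The main obstacle is the completeness half: one must revive the phase-plane and monotone time-map analysis of Chafee--Infante, and what makes this legitimate is precisely the observation that an equilibrium of the non-local equation satisfies a bona fide constant-coefficient ODE once $c=\|u_x\|^2$ is frozen; the accompanying norm bookkeeping --- that the non-local coefficient stays consistent under the rescale-and-reflect operations, both in the construction and in the identification --- is the secondary delicate point. Note that, unlike Section 2, this theorem needs no comparison principle.
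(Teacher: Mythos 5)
Your construction half is essentially the paper's own argument: the paper treats $j=1$ and $j=2$ explicitly (positive symmetric solution of \eqref{auxiliaryproblem} on $(0,\pi/j)$ with cubic coefficient $b/j$, via Lemmas \ref{lemaxistenceauxiliaryproblem} and \ref{lemmauniqueness}, then the $j^{-1/2}$ rescaling and odd reflection, which keeps the non-local coefficient consistent because the reflected function has the same $H^1_0$-norm) and then says ``an inductive argument can be applied''; your existence paragraph is exactly that induction, with the same bookkeeping, so there is nothing new there. Where you genuinely go beyond the paper is the completeness half: the paper only asserts, in its closing sentence, that these are the only equilibria ``since the function $a$ is non-decreasing'', without an argument, while you supply one by freezing $c=\|u_x\|^2$, observing that any equilibrium solves the constant-coefficient ODE $a(c)u_{xx}+\lambda u-bu^3=0$, and then running the classical Chafee--Infante phase-plane/time-map analysis (equal amplitudes from conservation of energy, equal nodal lengths, the bound $\lambda>k^2a(c)\ge k^2a(0)$ from the half-period exceeding its linearized value, and identification of each bump with $\phi_{1,\pi/k}$ via the rescaling $\psi=\sqrt{k}\,v$ and Lemma \ref{lemmauniqueness}). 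This is correct and is precisely what the monotonicity of $a$ is needed for (both in $a(c)\ge a(0)$ and in the uniqueness lemma), so your version makes the ``exactly $2N+1$'' claim into an actual proof rather than a remark. Two small points you lean on without proof, both classical and available in \cite{CH-IN}, deserve an explicit citation or a line of justification: that a bump returning to zero must have amplitude $\rho<\sqrt{\lambda/b}$ (energy below the saddle level), and that the half-period of the frozen ODE strictly exceeds $\pi\sqrt{a(c)/\lambda}$ for $\rho>0$ (monotonicity of the time map, or alternatively a one-line Sturm-type comparison on a nodal interval). With those references in place the proposal is complete.
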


%

We conclude that  the only equilibria for the problem \eqref{eq:bproblem} since the function $a$ is non-decreasing.

\section{Conclusion}

We constructed non-autonomous equilibria for \eqref{1.1} depends on the parameter $\lambda >0.$	This was interesting step, and it was the first one, to try to describe the structure of the attractor. There are still several interesting open questions that remain. One of them is that: Can we find other non-autonomous equilibria besides the ones we constructed?
Our expectations is that the
answer is no.

We still want to understand where the bifurcations happen. We believe that always that $\lambda >a(0)N^2,$ we will have a bifurcation and we can construct two symmetric solutions that change signs $N+1$ times in $[0,\pi].$

We can also investigate the equilibria we constructed, see if we have results of stability and hyperbolicity, see if we can describe the connections between them.

For the autonomous Chafee-Infante \eqref{eq:C-I}, we know that all the equilibria are hyperbolic and that the positive one is stable. We know the connections between the equilibria, 0 is connected to the other equilibria and we have connections between one equilibria to another one with less zeros. The proof of this results require the lap-number property and the fact that the operator is a Liouville-Sturm operator.

In the case \eqref{eq:carvalho}, the authors could prove the results related to stability of the equilibria and also proved a similar connection between the equilibria, that is because here, we can still use the lap-number property. But, the hyperbolicity is an open question. Hyperbolicity is a challenging subject in the non-autonomous case.

In \eqref{eq:carvalho}, they also show that the uniform attractor can be described using the non-autonomous equilibria for problems involving limit of translations of the function $\beta(\cdot)$ (functions in the global attractor of the driving semigroup). In \cite{CH-MA} the authors prove that the $\omega-$limit sets of solutions consist of symmetric functions. To that end they extend our Dirichlet problem to a periodic problem in an interval twice as big and show that, using the lap-number, we can guarantee that the $\omega-$limit is a union of ``sets of symmetric solutions''.

We believe that, if one can prove the lap-number-like properties for solutions, one will be able to give a full characterization of the uniform attractor.

(Alexandre N. Carvalho) Departamento de Matem\'{a}tica, Instituto de Ci\^{e}ncias Ma\-te\-m\'{a}\-ti\-cas e de Computa\c{c}\~{a}o, Universidade de S\~{a}o Paulo-Campus de S\~{a}o Carlos\\
Email addres: {\tt andcarva@icmc.usp.br}\\

(Tito L.M. Luna) Departamento de Matem\'{a}tica, Instituto de Ci\^{e}ncias Ma\-te\-m\'{a}\-ti\-cas e de Computa\c{c}\~{a}o, Universidade de S\~{a}o Paulo-Campus de S\~{a}o Carlos\\
Email addres: {\tt titoluna@dm.ufscar.br}\\

(Estefani M. Moreira) Departamento de Matem\'{a}tica, Instituto de Ci\^{e}ncias Ma\-te\-m\'{a}\-ti\-cas e de Computa\c{c}\~{a}o, Universidade de S\~{a}o Paulo-Campus de S\~{a}o Carlos\\
Email addres: {\tt estefani@usp.br}\\

(Yanan Li) School of Mathematics and Statistics, Zhengzhou
University, China\\
Email addres: {\tt lyn20112110109@163.com}


\begin{thebibliography}{10}
    \bibitem{AL-CO} C. O. Alves and F. J. S. A. Corr\^{e}a. On existence of solutions for a class of problem involving a nonlinear operator. {\it  Comm. Appl. Nonlinear Anal.}, 8, 43--56  (2001).

    \bibitem{Angenent-SS} S. B. Angenent. The Morse-Smale property for a semi-linear parabolic equation. {\it Journal of Differential Equations},  62, 427--442 (1986).

    \bibitem{Angenent-LN} S. B. Angenent. The zero set of a solution of a parabolic equation. {\it Journal f\"{u}r die reine und angewandte Mathematick}, 390, 79--96 (1988).

    \bibitem{ACR-B} J. M. Arrieta, A. N. de Carvalho and A. Rodriguez-Bernal. Attractors for Parabolic Problems with Nonlinear Boundary Condition. Uniform Bounds. {\it Communications in Partial Differential Equations}, 25 (1-2), 1--37 (2000).

    \bibitem {B} V. Barbu, {\it Nonlinear Semigroups and Differential Equations in Banach Spaces}. Editura Academiei Republicii Socialiste Rom\^{a}nia, Bucharest;
              Noordhoff International Publishing, Leiden, 1976. (Translated from the Romanian)

    \bibitem{BCL-Raugel} M. C. Bortolan, A. N. Carvalho, J. A. Langa, J. A.   and  G. Raugel. Non-autonomous perturbations of Morse-Smale semigroups: stability of the phase diagram. Preprint

    \bibitem {BR} H. Br\'{e}zis, {\it Operateurs maximaux monotones}. North Holland, Amsterdam, 1973.

    \bibitem{Bcv} R. C. D. S. Broche, A.N. de Carvalho  and  J. Valero. A non-autonomous scalar one-dimensional dissipative parabolic problem: The description of the dynamics. {\it Nonlinearity},  32, 4912--4941 (2019).

    \bibitem{CA-GE} A. N. Carvalho and  C. B. Gentile. Comparison results for nonlinear parabolic equations with monotone principal part. {\it  J. Math. Anal. Appl.}, 259,  319-337 (2001).
	
 	\bibitem{Carvalho et al.}
		A.N. Carvalho, J. A. Langa and J. C. Robinson. {\it  Attractors for infinite-dimensional non-autonomous dynamical systems}. Applied Mathematical Sciences. Springer: New York, 2013.

    \bibitem{CLR-PAMS} A. N. Carvalho, J.A. Langa, J. C. Robinson. Structure and bifurcation of pullback attractors in a non-autonomous Chafee-Infante equation. {\it  Proc. Amer. Math. Soc.}, 140 (7), 2012, 2357--2373.

    \bibitem{CH-IN} N. Chafee and E. F. Infante. A bifurcation problem for a nonlinear partial differential equation of parabolic type. {\it Applicable Analysis}, 4 (1), {17--37} (1974).

    \bibitem{Chaf-Inf} N. Chafee and E. F. Infante. Bifurcation and stability for a nonlinear parabolic partial differential equation. {\it Bulletin of the American Mathematical Society}, 80 (1), 49--52  (1974).

\bibitem {CH-MA} X. Y. Chen and H. Matano. Convergence, asymptotic periodicity and finite-point blow-up in one dimensional semilinear heat equations. {\it  J. Differential Equations}, 78, 160--190 ( 1989).

\bibitem {CH-VA-VC} M. Chipot, V. Valente and G. Vergara Caffarelli.  Remarks on a nonlocal problem involving the Dirichlet energy. Bend. Sem. Mat. Univ. Padova 110, (2003), 199--220. 

\bibitem{FR} B. Fiedler and C. Rocha. Heteroclinic orbits of semilinear parabolic equations. {\it J. Differential Equations}, 125, 239--281 (1996).

\bibitem{HE} D. Henry. {\it Geometric theory of semilinear parabolic equations}. Lecture Notes in Mathematics, 840. Springer, Berlin, 1981.

\bibitem{MA} H. Matano. Nonincrease of the lap-number of a solution for a one-dimensional semilinear parabolic
equation. {\it  J. Fac. Sci. Univ. Tokyo Sect. IA Math.}, 29, 401--441  (1982).

\bibitem {PR-WE} M. H. Protter and H. F. Weinberger. {\it Maximum principles in differential equations}. Springer-Verlag, New York, 1967.

\bibitem{RB-VL} A. Rodriguez-Bernal and A. Vidal-Lopez. Existence, uniqueness and attractivity properties of positive complete trajectories for non-autonomous reaction-diffusion problems. {\it Discrete and continuous dynamical systems}, 18 (2-3), 537--567 (2007).

\bibitem{ST} M. Struwe. {\it Variational Methods. Applications in nonlinear partial equations and Hamiltonian systems}. Ergebnisse der Mathematik und ihrer Grenzgebiete. 3. Folge. A Series of Modern Surveys in Mathematics, 34. Springer-Verlag, Berlin, 2008. 

\end{thebibliography}
	\end{document}